\pgfplotsset{compat=newest}
\newcounter{remark}[section]
\def\claim{\par\medskip\noindent\refstepcounter{remark}\hbox{\bf Remark \arabic{section}.\arabic{remark}}
\ 
}
\def\endclaim{
\par\medskip}
\newenvironment{remark}{\claim}{\endclaim}
\newcounter{example}[section]
\def\claimex{\par\medskip\noindent\refstepcounter{example}\hbox{\bf Example \arabic{section}.\arabic{example}}
	\ 
}
\def\endclaimex{
	\par\medskip}
\newenvironment{example}{\claimex }{\endclaimex}
\newtheorem{theorem}{Theorem}
\newtheorem{lemma}{Lemma}
\newtheorem{corollary}{Corollary}
\def\r{\rho}
\def\a{\alpha}
\def\l{\lambda}
\def\m{\mu}
\def\s{\sigma}
\def\d{\partial}
\def\endpf{{\ \hfill\hbox{\vrule width1.0ex height1.0ex}\parfillskip 0pt
}}
\newenvironment{proof}{\noindent{\bf Proof:}}{\endpf}
\begin{document}
\title{A rate balance principle and its application to queueing models}
\author{Binyamin Oz\footnote{Department of Statistics and Federmann Center for the Study of Rationality, The Hebrew University of Jerusalem}, Ivo Adan\footnote{Department of Industrial Engineering, Technische Universiteit Eindhoven} ~and Moshe Haviv\footnotemark[1]
}
\maketitle

\begin{abstract}

We introduce a rate balance principle for general (not necessarily Markovian) stochastic processes. Special attention is given to processes with birth and death like transitions, for which it is shown that for any state $i$, the rate of two consecutive transitions from $i-1$ to $i+1$, coincides with the corresponding rate from $i+1$ to $i-1$. This observation appears to be useful in deriving well-known, as well as new, results for the Mn/Gn/1 and G/Mn/1 queueing systems, such as a recursion on the conditional distributions of the residual service times (in the former model) and of the residual inter-arrival times (in the latter one), given the queue length.

\end{abstract}

%

\section{Introduction}

Consider a (not necessarily Markovian) stochastic process with $\mathcal{S}$ as its state-space and partition it in three sets:
$\mathcal{D}$, $\mathcal{M}$, and $\mathcal{U}$. We define an {\it  up path segment} as a path segment which commences in some state in $\mathcal{D}$, ends in $\mathcal{U}$, and uses as intermediate states (if any) only states in $\mathcal{M}$. In a similar fashion, we define a {\it down path segment}. We show that in any (finite) time interval, the number of up and down path segments differ by at most one. This implies that their steady state rates (if they exist) coincide. The case $\mathcal{M}=\emptyset$ is the well known balance principle between the sets $\mathcal{D}$ and $\mathcal{U}$. Another special case, on which we dwell in the sequel, are processes with birth and death like transitions.
In this case, with $\mathcal{M}$ consisting of the single state $i$, what we get is that the rate of two consecutive transitions from $i-1$ to $i+1$ (namely, transitions that avoid getting again into state $i-1$ before reaching $i+1$), equals the rate of corresponding ones from $i+1$ to $i-1$. Note, however, that this result does not extend to states further than two transitions away from each other. Through a number of examples, we show the usefulness of this special case in deriving known, and also new, results in single server queues. In particular, it leads to an alternative derivation for the limiting probabilities in M/G/1 and G/M/c queues and to less known results on the residual service and inter-arrival times given the queue lengths.

Section~\ref{RBPsec} states the main result which we call the {\it rate balance principle} (RBP). A few examples are given as well. Section~\ref{notations}  presents some preliminaries on the distribution of the residual of a random variable given it is larger than an independent exponentially distributed random variable. Section~\ref{GM1} shows how the limiting probabilities of the G/M/1 queue can be derived using the RBP. This is repeated for the G/Mn/1 queue in Section~\ref{GMn1}. We also derive a recursion on the distribution function of the residual of the inter-arrival times at departure instances given the queue length. In Section~\ref{MG1} we derive the corresponding results for the Mn/Gn/1 queue, in which case the residuals of the service times at arrival instances are of concern. 
To the best of our knowledge, the recursions on the residuals of the conditional inter-arrival times and service times are new.
Finally, Section~\ref{summ} concludes.

\section{Rate balance principle}\label{RBPsec}

Let $X=\{X(t),t\ge 0\}$ be a
continuous-time stochastic process with state
space $\mathcal{S}$. As in Section 3.2 of \cite{TahaStid}, we assume that $\mathcal{S}$ is a Polish (complete separable metric) space, with the Borel  $\s$-field  $\mathcal{B}(\mathcal{S}$),  and  that  $X$  is right continuous with left-hand limits. Let $\mathcal{D}$ and $\mathcal{U}$ be two non-empty and disjoint subsets of the state space,  $\emptyset \subsetneq \mathcal{D},\mathcal{U}\in \mathcal{B}(\mathcal{S})$, and let
$\mathcal{M}=\mathcal{S}\setminus (\mathcal{D}
\cup \mathcal{U})$.
We are interested in two types of path segments of this process. The first type are path segments that begin with a state in $\mathcal{D}$, end with a state in $\mathcal{U}$, and any other states in the segments (if any) are in $\mathcal{M}$. The second type are path segments that begin in $\mathcal{U}$, end in $\mathcal{D}$ and any other states (if any) are in $\mathcal{M}$. We refer to such path segments as $\mathcal{U}$-segments and $\mathcal{D}$-segments, respectively. More formally, let $\{I^\mathcal{U}_n,n\ge 1\}$ and $\{I^\mathcal{D}_n,n\ge 1\}$ be two point processes indicating the time instances where $X$ gets into $\mathcal{U}$ and $\mathcal{D}$, respectively, i.e.,
$$I^\mathcal{V}_n=\inf\{t>I^\mathcal{V}_{n-1}|X(t^-)\notin \mathcal{V},X(t)\in \mathcal{V}\},~~\mathcal{V}\in\{\mathcal{U},\mathcal{D}\},~~n\ge 1 ,$$
where $I^\mathcal{V}_0:=0$, $\mathcal{V}\in\{\mathcal{U},\mathcal{D}\}$.
Assume that the partition of $\mathcal{S}$ is such that a.s. $X$ gets into $\mathcal{U}$ and $\mathcal{D}$ infinitely often in $[0,\infty)$, but at most finitely often in every finite time interval $[0,t)$, $t\ge 0$.

A $\mathcal{U}$-segment end point is the first time instant that $X$ gets into $\mathcal{U}$, after getting into $\mathcal{D}$, and a $\mathcal{D}$-segment end point is defined symmetrically. Formally, the counting process of $\mathcal{U}$-segments and $\mathcal{D}$-segments are defined by,
$$
N^\mathcal{U}(t)=\#\{k|I^\mathcal{U}_k\le t,\exists m \text{ s.t. } I^\mathcal{U}_{k-1}<I^\mathcal{D}_{m}<I^\mathcal{U}_{k}\}
$$
and
$$
N^\mathcal{D}(t)=\#\{k|I^\mathcal{D}_k\le t,\exists m \text{ s.t. } I^\mathcal{D}_{k-1}<I^\mathcal{U}_{m}<I^\mathcal{D}_{k}\},
$$
respectively. We denote by $\{T^\mathcal{V}_n,n\ge 1\}$, the point process associated with $N^\mathcal{V}$, $\mathcal{V}\in \{\mathcal{U},\mathcal{D}\}$.

The following theorem states that the steady state rates at which the two types of path segments occur are equal.
\begin{theorem}\label{RBP}
The following limits, if they exist, are equal:
 $$\lim_{t\to \infty} \frac{N^{\mathcal{U}}(t)}{t} = \lim_{t\to \infty} \frac{N^{\mathcal{D}}(t)}{t}.$$
\end{theorem}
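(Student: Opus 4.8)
The plan is to establish the stronger, pathwise fact announced in the introduction: for every $t\ge 0$,
$$|N^{\mathcal{U}}(t)-N^{\mathcal{D}}(t)|\le 1.$$
Once this is in hand, dividing by $t$ and letting $t\to\infty$ immediately forces the two limits to agree whenever both exist, since $1/t\to 0$. The standing assumption that $X$ visits $\mathcal{U}$ and $\mathcal{D}$ only finitely often in finite time intervals guarantees that $N^{\mathcal{U}}(t)$ and $N^{\mathcal{D}}(t)$ are finite for each $t$, so the division is legitimate.

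The pathwise inequality will follow from an \emph{interleaving} property of the two sequences of segment endpoints $\{T^{\mathcal{U}}_n\}$ and $\{T^{\mathcal{D}}_n\}$, namely: strictly between any two consecutive $\mathcal{U}$-segment endpoints there lies at least one $\mathcal{D}$-segment endpoint, and, symmetrically, strictly between any two consecutive $\mathcal{D}$-segment endpoints there lies at least one $\mathcal{U}$-segment endpoint. Granting this, if $N^{\mathcal{U}}(t)=n$, then the $n-1$ gaps between the $n$ endpoints $T^{\mathcal{U}}_1<\dots<T^{\mathcal{U}}_n\le t$ contain $n-1$ distinct $\mathcal{D}$-segment endpoints, all lying before $t$, so $N^{\mathcal{D}}(t)\ge N^{\mathcal{U}}(t)-1$; the symmetric bound gives $N^{\mathcal{U}}(t)\ge N^{\mathcal{D}}(t)-1$, and together these yield the claim.

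To prove the interleaving, I would argue directly from the definitions. Fix consecutive $\mathcal{U}$-segment endpoints $T^{\mathcal{U}}_j=I^{\mathcal{U}}_a$ and $T^{\mathcal{U}}_{j+1}=I^{\mathcal{U}}_b$ with $a<b$ and $a\ge 1$. Since $I^{\mathcal{U}}_b$ is a $\mathcal{U}$-segment endpoint, there is an index $m$ with $I^{\mathcal{U}}_{b-1}<I^{\mathcal{D}}_m<I^{\mathcal{U}}_b$; as $I^{\mathcal{U}}_a\le I^{\mathcal{U}}_{b-1}$, the process enters $\mathcal{D}$ at least once strictly inside $(T^{\mathcal{U}}_j,T^{\mathcal{U}}_{j+1})$. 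Let $I^{\mathcal{D}}_{m^*}$ be the first $\mathcal{D}$-entry strictly after $I^{\mathcal{U}}_a$; then $m^*\le m$ and $I^{\mathcal{D}}_{m^*-1}\le I^{\mathcal{U}}_a$. Because an entry into $\mathcal{D}$ and an entry into $\mathcal{U}$ can never occur at the same instant (the state cannot lie in the two disjoint sets simultaneously), this inequality is strict: $I^{\mathcal{D}}_{m^*-1}<I^{\mathcal{U}}_a<I^{\mathcal{D}}_{m^*}$, which is precisely the condition for $I^{\mathcal{D}}_{m^*}$ to be a $\mathcal{D}$-segment endpoint, and moreover $T^{\mathcal{U}}_j<I^{\mathcal{D}}_{m^*}\le I^{\mathcal{D}}_m<T^{\mathcal{U}}_{j+1}$. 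Exchanging the roles of $\mathcal{U}$ and $\mathcal{D}$ gives the symmetric statement, completing the interleaving.

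I do not anticipate a serious analytic obstacle; the argument is essentially combinatorial. The part requiring the most care is the index bookkeeping at the left boundary: one must check that a genuine segment endpoint $I^{\mathcal{U}}_a$ has $a\ge 1$, so that $I^{\mathcal{U}}_a>0=I^{\mathcal{D}}_0$ and hence $I^{\mathcal{D}}_{m^*-1}<I^{\mathcal{U}}_a$ holds strictly even when $m^*=1$. This, together with the observation that the $\mathcal{U}$- and $\mathcal{D}$-entry times are pairwise distinct, is exactly what makes ``the first $\mathcal{D}$-entry after a $\mathcal{U}$-segment endpoint'' itself a $\mathcal{D}$-segment endpoint, which is the crux of the whole proof.
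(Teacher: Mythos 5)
Your proposal is correct and follows essentially the same route as the paper: both establish the pathwise bound $|N^{\mathcal{U}}(t)-N^{\mathcal{D}}(t)|\le 1$ by showing that between consecutive $\mathcal{U}$-segment endpoints there must lie a $\mathcal{D}$-segment endpoint (and vice versa), using the same key construction of taking the first entry into $\mathcal{D}$ after the relevant $\mathcal{U}$-time and checking $I^{\mathcal{D}}_{m^*-1}<I^{\mathcal{U}}_{\cdot}<I^{\mathcal{D}}_{m^*}$, and then dividing by $t$. Your write-up is merely a bit more explicit about the strictness at the boundary and about how interleaving yields the $\pm 1$ count, which the paper compresses into ``the two types of path segments occur alternatingly.''
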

\begin{proof}
	Suppose that $I^\mathcal{U}_k$ is a $\mathcal{U}$-segment end point. We argue that the previous segment end point is a $\mathcal{D}$-segment end point. Let $m^*=\min\{m|I^\mathcal{U}_{k-1}<I^\mathcal{D}_{m}<I^\mathcal{U}_{k}\}$. Note that $m^*$ exists since it is the minimum of a finite, non-empty set. Observe that $I^\mathcal{D}_{m^*-1}<I^\mathcal{U}_{k-1}<I^\mathcal{D}_{m^*}$ and hence, $I^\mathcal{D}_{m^*}$ is a $\mathcal{D}$-segment end point. The fact that the previous $\mathcal{U}$-segment end point is less than or equal to $I^\mathcal{U}_{k-1}$ completes the argument. Reversing  the  roles of $\mathcal{U}$ and $\mathcal{D}$ gives a symmetric argument.

The above implies that the two types of path segments occur alternatingly and hence,
$N^\mathcal{U}(t)$ and $N^\mathcal{D}(t)$ differ at most by $1$ for all $t \ge 0$ and thus
	$$
	\lim_{t\to \infty} \frac{N^\mathcal{U}(t)-N^\mathcal{D}(t)}{t} = 0,
	$$
	which completes the proof.
\end{proof}
\\

Practically speaking, in order to use RBP, one should consider sets $\mathcal{D}$ and $\mathcal{U}$ such that $\mathcal{U}$-segments and $\mathcal{D}$-segments have a simple structure. For example, consider a partition $\mathcal{D}, \mathcal{U}, \mathcal{M}$, and consider the following directed graph: the set of vertices is $\mathcal{M}$, and the set of edges is $\{(i,j):i,j\in \mathcal{M}\text{ and }i\to j\text{ is a possible transition}\}$. If this graph is acyclic, then for each $d \in \mathcal{D}$ and each $u \in \mathcal{U}$ there are finitely many path segments emanating from $d$ and ending in $u$ (and vice versa). We give a couple of examples for such choice of $\mathcal{D}$ and $\mathcal{U}$ below.
\\

\begin{example}\label{RBE} {\bf Rate-Balance Equation} \\
This example is based on Theorem 3.7 in \cite{TahaStid}. Let $A\in \mathcal{B}(\mathcal{S})$ and let $\mathcal{D}=A$ and $\mathcal{U}=A^c$ (so $\mathcal{M} = \emptyset$). Of course, Theorem \ref{RBP} implies that the rate of transitions from $A$ to $A^c$, i.e., the rate of transitions out of $A$, equals the rate of transitions from $A^c$ to $A$, i.e., the rate of transitions into the set $A$.
In case $\mathcal{S} = \mathbb{Z}$ and $A=\{k:k\in \mathbb{Z},k\le \ell\}$ for some $\ell\in \mathbb{Z}$, we get the classical \emph{level crossing} argument.
\end{example}

\begin{example}{\bf Two-Step Transitions (TST)}\\
Consider a process with state space $\mathcal{S} = \mathbb{Z}^+$ where transitions are of size 1, e.g., the number of customers in a queueing system where customers arrive one by one and are served one at a time. For $n\ge 1$, let $\mathcal{D}=\{k:0\le k <n\}$ and $\mathcal{U}=\{k:k>n \}$. Here, $\mathcal{U}$-segments have the following form: they begin with a transition from state $n-1$ to state $n$, and end with a transition from state $n$ to state $n+1$, i.e., two consecutive up transitions. We refer to such path segments as $n$-two-step up transitions. Symmetrically, $\mathcal{D}$-segments have the form of two consecutive down transitions, from state $n+1$ to state $n$, and then from state $n$ to state $n-1$. We refer to such path segments as $n$-two-step down transitions. Theorem \ref{RBP} implies that the rates of $n$-two-step up transitions and $n$-two-step down transitions are equal.
In that case, as in the general case, the reason for this equality is that the two transitions occur alternately. Suppose that an $n$-two-step up transition just occurred. That means that the process is currently in state $n+1$. In order for another $n$-two-step up transition to occur, the process must visit state $n-1$ first, and for that to happen, an $n$-two-step down transition must occur. Figure \ref{two-step} shows an example of a sample path where 3-two-step transitions are marked with a solid line.
	\begin{figure}[h!]
		\centering
		\begin{tikzpicture}[x=0.6cm,y=0.45cm]
		
		\def\xmin{0}
		\def\xmax{14}
		\def\ymin{0}
		\def\ymax{6}
					
		\draw[->] (\xmin,\ymin) -- ({\xmax*1.1},\ymin) node[below] {$t$};

		\draw[->] (\xmin,{\ymin}) -- (\xmin,{\ymax*1.1}) node[left] {$X(t)$};
	
		\foreach \i in {0,1,...,5}
		\draw[-] (\xmin,\i) -- ({\xmin-0.1},\i) node[left] {{\i}};
		\draw[-,dashed] (0,3) -- ({\xmax*1.05},3);
		\def\n{0}
		\def\u{0}
		
		\newcommand\jump[2]{
			\edef\d{#1}
			\edef\m{#2}
			\draw[-,thin] (\u,\n) -- (\u+\d,\n) -- (\u+\d,\n+\m);
			\edef\u{\u+\d}
			\edef\n{\n+\m}
		}
		
		\newcommand\emphjump{ \draw[-,very thick] (\u,\n-\m) -- (\u,\n) }
		
		\newcommand\markjump[1]{ \draw[-,dotted] (\u,\n-\m) -- (\u,{\ymin}) node[below]{#1}}
		
		\jump{0.8}{1};
		\jump{0.2}{1};
		\jump{1}{-1};
		\jump{0.6}{1};
		\jump{0.8}{1};
		\jump{0.2}{-1};
		\jump{0.9}{1};
		\emphjump;
		\jump{0.2}{1};
		\emphjump;
		\markjump{$T_1^{\mathcal{U}}$};
		\jump{0.7}{1};
		\jump{1.2}{-1};
		\jump{0.2}{-1};
		\jump{1.1}{1};
		\jump{0.4}{-1};
		\emphjump;
		\jump{0.7}{-1};
		\emphjump;
		\markjump{$T_1^{\mathcal{D}}$};
		\jump{0.9}{-1};
		\jump{0.3}{1};
		\jump{0.5}{1};
		\emphjump;
		\jump{0.3}{1};
		\emphjump;
		\markjump{$T_2^{\mathcal{U}}$};
		\jump{0.7}{1};
		\jump{0.5}{-1};
		\jump{0.4}{-1};
		\emphjump;
		\jump{1}{-1};
		\emphjump;
		\markjump{$T_2^{\mathcal{D}}$};
		\jump{0.7}{-1};	
		\end{tikzpicture}
		\caption{A sample path with 3-two-step transitions}  \label{two-step}
	\end{figure}
Note that this result does not extend to three (or more) consecutive up (or down) transitions. For example, the rate of immediate transition from $n-1$ to $n$ and then
immediately to $n+1$ and $n+2$, does not coincide with the corresponding rate from $n+2$ to $n-1$.
\end{example}


\section{Preliminaries} \label{notations}

Let  $Y_{\l}$ and $X$ be two independent generic random variables such that $Y_{\l}\sim \text{exp}(\l)$, $X$ is nonnegative and the cumulative distribution function (CDF),
Laplace-Stieltjes transform (LST) and mean of $X$ are denoted by $F$, $F^*$, and $\overline{x}$, respectively. We are interested in the random variable that is defined as the difference $X-Y_{\l}$ given that $X \ge Y_{\l}$. We further assume that $P(X \in (0,\infty))>0$ (or equivalently $\overline{x} > 0$) and hence, the event $X \ge Y_{\l}$ is non-empty. Denote the CDF, LST and mean of this random variable by $D_{\l,F}$, $D^*_{\l,F}$, and $\overline{d}_{\l,F}$, respectively.
Straightforward calculations lead to
\begin{equation}\label{DCDF}
D_{\l,F}(w)=\frac{\int_{u=0}^{w}  \l e^{\l u}\int_{x=u}^\infty{e^{-\l x}\,dF(x)}\,du}{1-F^*(\l)}, \quad w \ge 0,
\end{equation}

\begin{equation}\label{DLST}
D^*_{\l,F}(s)=\frac{\l}{1-F^*(\l)}\frac{F^*(s)-F^*(\l)}{\l-s}, \quad Re(s) \ge 0,
\end{equation}
and
\begin{equation}\label{DMEAN}
\bar{d}_{\l,F}=\frac{\overline{x}}{1-F^*(\l)}-\frac{1}{\l} .
\end{equation}

\begin{theorem}\label{unique}
	Let $H$ be a CDF with $H(0) = 0$ and probability density function (PDF) $h$ such that $0<h(0)<\infty$. Then, for each $\l>0$, there exists a unique CDF $F$ such that $H=D_{\l,F}$.
\end{theorem}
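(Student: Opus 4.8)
The plan is to invert the relation $H = D_{\lambda,F}$ using the transform identity \eqref{DLST}. Writing $D^*_{\lambda,F} = H^*$ and rearranging \eqref{DLST}, one obtains
\[
F^*(s) = F^*(\lambda) + \frac{(1-F^*(\lambda))}{\lambda}(\lambda - s)H^*(s).
\]
The unknowns here are the function $F^*$ and the single constant $c := F^*(\lambda) \in (0,1)$; once $c$ is pinned down, the formula above \emph{defines} $F^*$ uniquely, and uniqueness of $F$ follows from uniqueness of Laplace--Stieltjes transforms. So the first step is to determine $c$. Setting $s = \lambda$ in the displayed identity is vacuous ($0=0$), so instead I would differentiate at $s=\lambda$, or equivalently expand $H^*(s)$ to first order around $s=\lambda$; since $H$ has a density $h$ with $h(0)\in(0,\infty)$, $H^*$ is differentiable and $\frac{d}{ds}[(\lambda-s)H^*(s)]\big|_{s=\lambda} = -H^*(\lambda)$. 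Matching the derivative of $F^*$ at $\lambda$ (which is finite because $X$ is nonnegative with finite mean, a fact one should check is forced) gives a linear equation for $c$ in terms of $H^*(\lambda)$, yielding $c = 1/(1 + \lambda \,?)$-type expression; more cleanly, evaluate the identity's behaviour as $s \to \infty$: since $X \ge 0$, $F^*(s) \to F(0^+)$, and $H^*(s) \to 0$ with $sH^*(s) \to h(0)$ (Tauberian/initial-value theorem, valid because $h(0)$ exists and is finite), so the $(\lambda - s)H^*(s)$ term tends to $h(0)\cdot(1-c)/\lambda \cdot \lambda$... I would reconcile these two limiting relations to solve for $c$.

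Concretely, I would proceed as follows. First, establish necessary conditions: if $H = D_{\lambda,F}$ for some CDF $F$, then from \eqref{DLST} the candidate transform is $\Phi(s) := c + \frac{1-c}{\lambda}(\lambda - s)H^*(s)$ for the right constant $c$, so there is at most one such $F$. Second, using the initial-value theorem $\lim_{s\to\infty} sH^*(s) = h(0)$ together with $\lim_{s\to\infty}\Phi(s) = \lim_{s\to\infty} F^*(s) =: p_0 \ge 0$ (the atom of $F$ at $0$, which in our queueing applications is $0$ but need not be in general), one gets $p_0 = \frac{1-c}{\lambda} h(0)$; and the normalization $\Phi(0) = F^*(0) = 1$ gives $c + (1-c)H^*(0) = c + (1-c) = 1$, which is automatic and gives no information, so I instead use $\lim_{s\to 0^+}$ of the derivative or the mean relation \eqref{DMEAN}: $\bar h = \overline{x}/(1-c) - 1/\lambda$ links the (known) mean of $H$ to $\overline{x}$ and $c$ — still two unknowns. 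The clean closure comes from $p_0$: in the general statement we must allow $F(0^+) = p_0 > 0$, and then $c = 1 - \lambda p_0/h(0)$; but $p_0$ is itself $\lim_{s\to\infty}\Phi(s)$, a self-referential fixed point that the initial-value theorem resolves because $(\lambda - s)H^*(s) = \lambda H^*(s) - sH^*(s) \to 0 - h(0) = -h(0)$, giving $p_0 = \lim \Phi(s) = c - \frac{1-c}{\lambda}h(0)$, i.e. $c = \frac{\lambda p_0 + h(0)}{\lambda + h(0)}$ once combined with $p_0 = \frac{1-c}{\lambda}h(0)$... — I would solve this small linear system cleanly to get $c$ explicitly in terms of $\lambda$ and $h(0)$ alone, which is the crux.

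Third, having fixed $c \in (0,1)$ (one must verify $0 < c < 1$, which is where the hypothesis $0 < h(0) < \infty$ is used — it guarantees the denominator is positive and the resulting $c$ lies strictly between $0$ and $1$), I would verify that $\Phi(s) = c + \frac{1-c}{\lambda}(\lambda-s)H^*(s)$ is genuinely the Laplace--Stieltjes transform of a valid probability distribution $F$ on $[0,\infty)$: check $\Phi(0)=1$, complete monotonicity or direct inversion. The cleanest route to showing $F$ is a bona fide CDF is to invert \eqref{DCDF} directly rather than the transform: differentiating the defining relation $H(w) = D_{\lambda,F}(w)$ should give an explicit ODE-type expression for $dF$ in terms of $h$ and $\lambda$, from which nonnegativity of the resulting measure is read off using $h \ge 0$; I expect $dF(x) \propto (h(x) - \frac{1}{\lambda}h'(x))$ or similar, but I would be cautious since that requires $h$ differentiable, which is \emph{not} assumed.

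The main obstacle, I expect, is precisely this last point: proving that the formally inverted $F$ is an honest CDF (nondecreasing, right-continuous, total mass $1$) under only the stated regularity ($H$ absolutely continuous with $0 < h(0) < \infty$, nothing about smoothness of $h$ away from $0$). Existence of the constant $c$ is routine linear algebra with the initial-value theorem; uniqueness is immediate from injectivity of the LST; but the nonnegativity of the inverse measure is the substantive claim. I anticipate the paper handles this by working with the integral form of \eqref{DCDF}: the relation $(1-F^*(\lambda))H(w) = \int_0^w \lambda e^{\lambda u}\int_u^\infty e^{-\lambda x}dF(x)\,du$ can be manipulated — differentiate once to remove the outer integral, getting $(1-F^*(\lambda))h(w) = \lambda e^{\lambda w}\int_w^\infty e^{-\lambda x}dF(x)$, hence $\int_w^\infty e^{-\lambda x}dF(x) = \frac{(1-F^*(\lambda))}{\lambda}e^{-\lambda w}h(w)$, and then $e^{-\lambda w}\overline F(w)$-type quantities are extracted; the measure $dF$ is recovered by a further (signed) differentiation, and one argues its positivity from the fact that $w \mapsto e^{-\lambda w}h(w)$ paired against the known positivity of the left side forces it. I would present the argument in that order — derive the formula for $c$ from the boundary behaviour, then exhibit $F$ explicitly from the integral relation and check it is a CDF — flagging the nonnegativity verification as the step requiring genuine care.
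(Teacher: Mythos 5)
Your overall architecture matches the paper's: regard \eqref{DLST} with $D^*_{\lambda,F}=H^*$ as an equation for $F^*$, note that once the single constant $c=F^*(\lambda)$ is fixed the identity determines $F^*$ (hence $F$, by injectivity of the LST), and use $h(0)$ to pin down $c$. But the step you yourself call the crux --- actually determining $c$ --- is left unresolved, and the route you sketch for it fails. The large-$s$ analysis of your identity yields only the single relation $p_0=c-\frac{1-c}{\lambda}h(0)$, where $p_0$ is the atom of $F$ at $0$; your second relation $p_0=\frac{1-c}{\lambda}h(0)$ is asserted with no justification, and combining the two would give $c=2h(0)/(\lambda+2h(0))$, not the correct value $c=h(0)/(\lambda+h(0))$. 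More fundamentally, no boundary analysis of the transform identity can determine $(c,p_0)$ jointly: $D_{\lambda,F}$ is unchanged if one moves mass of $F$ into an atom at $0$ and renormalizes (such mass contributes nothing to the event $X\ge Y_{\lambda}$), so $c=F^*(\lambda)$ is simply not identifiable from $H$ unless one imposes $F(0)=0$. Your deliberate refusal to set $p_0=0$ ``in general'' is therefore exactly what blocks the argument; the theorem's unique $F$ is the one with no mass at $0$, and with $p_0=0$ your first relation already gives the right $c$.

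The paper's determination of $c$ is elementary and is in fact hiding in your own third paragraph: differentiate the distributional form \eqref{con1} in $w$ and set $w=0$ (equivalently, put $w=0$ in your relation $(1-F^*(\lambda))h(w)=\lambda e^{\lambda w}\int_w^\infty e^{-\lambda x}\,dF(x)$), which gives $h(0)=\lambda F^*(\lambda)/(1-F^*(\lambda))$, i.e.\ \eqref{gamma}--\eqref{Fstar}, and then \eqref{Fstars} is precisely your displayed formula with this $c$. That is the whole of the paper's proof; it stops at the unique candidate. Your additional concern --- verifying that the inverted $F^*$ is the LST of a bona fide CDF, which in effect requires $e^{-\lambda w}h(w)$ to be nonincreasing --- is a legitimate observation, but it is not addressed by the paper either, so it is not the gap that separates your attempt from the paper's argument; the missing/incorrect piece is the $w=0$ differentiation of \eqref{con1} that fixes $F^*(\lambda)$.
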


\begin{proof}
	Fix $\l > 0$ and consider a CDF $F$. Let $F^*$ and $H^*$ be the LSTs associated with $F$ and $H$, respectively. From \eqref{DLST} and \eqref{DCDF} we learn that
	\begin{equation}\label{con}
	\frac{\l}{1-F^*(\l)}\frac{F^*(s)-F^*(\l)}{\l-s}=H^*(s), \quad Re(s) \ge 0,
	\end{equation}
	and
	\begin{equation}\label{con1}
	\frac{\int_{u=0}^{w}  \l e^{\l u}\int_{x=u}^\infty{e^{-\l x}\,dF(x)}\,du}{1-F^*(\l)}=H(w), \quad w \ge 0,
	\end{equation}
		are two (equivalent) necessary conditions for $H=D_{\l,F}$. Taking the derivative with respect to $w$ of \eqref{con1} and setting $w=0$ gives
	
	\begin{equation}\label{gamma}
	\l \frac{F^*(\l)}{1-F^*(\l)}= h(0) =:\gamma .
	\end{equation}
	Solving for $F^*(\l)$ gives
	\begin{equation}\label{Fstar}
	F^*(\l)=\frac{\gamma}{\l+\gamma}.
	\end{equation}
	Finally, \eqref{Fstar} and \eqref{con}  uniquely define the LST $F^*$ as
	\begin{equation}\label{Fstars}
	F^*(s)=H^*(s)\frac{\l-s}{\l}\frac{\l}{\l+\gamma}+\frac{\gamma}{\l+\gamma}
	\end{equation}
\end{proof}

\begin{remark}
	Let $H$ be such that $h(0)$ equals zero or infinity. Equation \eqref{gamma} implies that $F^*(\l)$ equals zero or one, respectively, for any $F$ such that $H=D_{\l,F}$. Observe that $F^*(\l)=\int_0^\infty{e^{-\l x}\,dF(x)}$ is the probability that a nonnegative random variable $X$ with CDF $F$  is less than an independent exponential random variable with rate $\l$. Hence, $F^*(\l)$ being equal to zero or one contradicts the assumption $P(X\in(0,\infty))>0$.
\end{remark}


\section{The G/M/1 queueing model} \label{GM1}
In this section we derive some well-known results on the G/M/1 queue using the rate balance principle.
Consider a G/M/1 queue where $G$, $G^*$, and $1/\l$ are the CDF, LST, and mean of the inter-arrival times, respectively. Service times are exponential with rate $\m$.  Assume that the system is stable, so $\l < \m$, and denote the steady state probability of having $n$ customers in the system just before an arrival, or just after a departure
instance by $a_n$. Denote by $\pi_n$, $n\ge 0$, the steady state probability of having $n$ customers in the system at an arbitrary instance, and
let $R_n$ and $R^*_n$ denote the CDF and LST, respectively, of the steady state distribution of the residual inter-arrival time at a departure instance, conditioned on the queue length $n$.

\begin{theorem} \label{GM1Rn}
	For $n\ge 0$, $R^*_n$ is not a function of $n$. In other words, the residual inter arrival time and the number of customers in the queue at departure instances are independent.
\end{theorem}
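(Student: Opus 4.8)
The plan is to reduce the assertion to a product-form property of the stationary law of a two-dimensional Markov process, and to obtain that product form by the rate-balance principle in its level-crossing form (the $\mathcal M=\emptyset$ case of Theorem \ref{RBP}, i.e.\ the rate-balance equation of Example \ref{RBE}) applied to the residual-time coordinate.

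First I would work with $Z(t)=(N(t),r(t))$, where $N(t)$ is the number in the system and $r(t)$ the residual inter-arrival time. Since service is exponential (hence memoryless), $Z$ is Markov: $r$ decreases at unit rate and, on hitting $0$, refreshes to an independent $G$-variate while $N$ increases by one; independently, at rate $\mu$ whenever $N\ge1$, $N$ decreases by one with $r$ unchanged. Stability gives a unique stationary law; write $\phi_m$ for the stationary density of $r$ on $\{N=m\}$ (and $g$ for the density of $G$, the non-density case being handled by a routine approximation). Because departures occur at the \emph{constant} rate $\mu$ whenever the server is busy, the state just before a departure is distributed as the stationary $Z$ conditioned on $\{N\ge1\}$; conditioning further on the departure leaving $n$ behind identifies $R_n$ with the stationary conditional law of $r$ given $N=n+1$. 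Hence the theorem becomes: under the stationary law, $r$ and $N$ are independent on $\{N\ge1\}$, i.e.\ $\phi_m(x)$ splits as a function of $m$ times a function of $x$ for $m\ge1$.

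Next I would derive the stationary equations by rate balance. For each $m\ge0$ and $x>0$, apply Theorem \ref{RBP} with $\mathcal D=\{(m,x'):0<x'<x\}$ and $\mathcal U=\mathcal D^{c}$: the rate into $\mathcal D$ (drift crossing of level $x$ inside level $m$, service transitions from level $m{+}1$ below $x$, and arrivals into level $m$ landing below $x$) equals the rate out (service transitions to level $m{-}1$ below $x$, and the arrival fired when $r$ reaches $0$ in level $m$), which gives
\[
\phi_m(x)+\mu\!\int_0^x\!\phi_{m+1}+\phi_{m-1}(0)G(x)=\mu\!\int_0^x\!\phi_m+\phi_m(0),\qquad m\ge1,
\]
together with $\phi_0(x)+\mu\int_0^x\phi_1=\phi_0(0)$; differentiating in $x$ turns these into $\phi_m'=\mu\phi_m-\mu\phi_{m+1}-\phi_{m-1}(0)g$ and $\phi_0'=-\mu\phi_1$. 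I would then solve with the ansatz $\phi_m(x)=\sigma^{m}h(x)$ on $m\ge1$: the recursion collapses to the single linear ODE $h'=\mu(1-\sigma)h-\sigma^{-1}h(0)\,g$, and the constraint $h(\infty)=0$ forces $\sigma$ to be the unique root in $(0,1)$ of $\sigma=G^*(\mu(1-\sigma))$ while pinning $h$ to the density appearing in \eqref{DCDF} with parameters $(\mu(1-\sigma),G)$; checking the $m=1$ equation (it fixes $\phi_0(0)=h(0)$), solving for $\phi_0$, and normalizing produce a bona fide stationary law, hence \emph{the} stationary law. In particular $\phi_{n+1}\propto h$ for every $n\ge0$, so $R_n=D_{\mu(1-\sigma),G}$ independently of $n$; as a by-product one recovers $a_n=(1-\sigma)\sigma^{n}$.

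I expect the rate-balance bookkeeping in the third step to be the main obstacle: one must locate and sign correctly the boundary contributions of this piecewise-deterministic dynamics — the inflow of probability at rate $\phi_{m-1}(0)$ spread as $g$ after each arrival, and the outflow at $r=0$ — since a slip there yields the wrong ODE. A second route, closer to the two-step-transition example and avoiding the continuous coordinate, is to apply Theorem \ref{RBP} with $\mathcal M=\{n\}$ at each level and track the residual: this gives the backward recursion $R_{n-1}=(1-G^*(\mu))D_{\mu,G}+G^*(\mu)D_{\mu,C_n}$ with $C_n$ proportional to $(1-e^{-\mu\cdot})\,dR_n$; Theorem \ref{unique} makes each step invertible, and the remaining work is to show that the only solution consisting of genuine distributions for all $n$ is the constant one, using that $1-R_n^*(\mu)$ stays bounded away from $0$.
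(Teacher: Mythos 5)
Your proposal is correct in outline, but it is a genuinely different proof from the paper's. The paper settles Theorem \ref{GM1Rn} with a three-line structural argument: because service is exponential, the queue-length process is insensitive to the (work-conserving, non-anticipating) service discipline, so one may assume LCFS-PR; under LCFS-PR a departing customer leaves behind exactly the customers she found upon arrival, so the number left behind is determined by the history up to her arrival, while the residual inter-arrival time at her departure is a functional only of the interval between her arrival and departure, whence independence. You instead carry out a full supplementary-variable analysis of the Markov process $(N(t),r(t))$: level-crossing balance in the residual coordinate (a legitimate use of Theorem \ref{RBP} with $\mathcal{M}=\emptyset$ on the Polish state space $\mathbb{Z}^+\times[0,\infty)$), a product-form ansatz $\phi_m=\sigma^m h$, identification of $h$ with the density of $D_{\mu(1-\sigma),G}$ in \eqref{DCDF} and of $\sigma$ with the root of \eqref{sigma}, and uniqueness of the stationary law; I checked your balance equations, the resulting ODEs, the consistency condition $\phi_0(0)=h(0)$, and the normalization under $\lambda<\mu$, and they are right. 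What your route buys is that Theorem \ref{resmg1dist}, the fixed point \eqref{sigma}, and the geometric form of $a_n$ in Corollary \ref{pidiscoro} all drop out simultaneously as by-products, rather than being proved separately afterwards. What it costs is that it is precisely the supplementary-variable machinery the paper is at pains to avoid, and it leans on several standard but unproved ingredients: the ``conditional PASTA'' step identifying the pre-departure Palm law with the stationary law conditioned on $\{N\ge 1\}$ (your constant-intensity justification is the correct reason, but it merits an explicit Papangelou/ergodic-ratio argument), uniqueness of the stationary law of the piecewise-deterministic process, and the smoothing step when $G$ has no density. Your second sketched route (inverting the TST recursion via Theorem \ref{unique}) is, as you yourself note, incomplete --- showing that the constant sequence is the only admissible solution is exactly the missing work --- so the first route should be regarded as your proof.
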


\begin{proof}
Since service times are memoryless, the queue length distribution is insensitive to the service regime, as long as it is work conserving and non-anticipating. Hence we
	assume w.l.g. that the service regime is Last Come First Served with preemption (LCFS-PR)
and consider such a system at departure instances. The number of customers just after departure equals the number of customers that the departing customer saw upon her arrival. Observe that this number is independent of any random variable solely defined by the period that begins at her arrival and ends at her departure, such as, e.g., the residual inter-arrival time at her departure.
\end{proof}
\\

In the remainder of this section we denote $R_n$ and $R^*_n$, $n\ge 0$, by $R$ and $R^*$, respectively. For its explicit formula we refer to Remark~4.1 at the end of this section.
\cite{HK11,sindo} consider the similar residual, but then at arbitrary instances, and show that the independence in Theorem~\ref{GM1Rn}
extends only to where it is given that $n \geq 1$. Hence, two distributions are derived, one for the case where $n=0$ and one for the case
where $n \geq 1$. The following theorem presents a well known result for the steady state probabilities $a_n$ and $\pi_n$, though the characterization of $\sigma$ is new.
\\


\begin{theorem} \label{pirecgm1}
	Let $\s =G^*(\m)/(1-R^*(\m))$. Then,
	
	\begin{equation} \label{pirec}
		\frac{\pi_{n+1}}{\pi_n}=\frac{a_{n}}{a_{n-1}}=\s,~n\ge 1.
	\end{equation}
In particular, the above two ratios are not function of $n$ as long as $n \geq 1$.
\end{theorem}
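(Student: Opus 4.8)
The plan is to apply the Two-Step Transitions version of the rate balance principle (RBP) at level $n$, i.e., with $\mathcal{D}=\{k:0\le k<n\}$, $\mathcal{M}=\{n\}$, and $\mathcal{U}=\{k:k>n\}$, to the queue-length process of the G/M/1 queue. The $n$-two-step up transitions are arrivals that bring the queue from $n-1$ to $n$ and then a further arrival (before any service completion) from $n$ to $n+1$; the $n$-two-step down transitions are service completions from $n+1$ to $n$ followed by a further service completion (before any arrival) from $n$ to $n-1$. I would compute each rate in terms of the arrival-instant probabilities $a_n$ and the residual inter-arrival CDF $R$, and then set them equal.

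First I would express the rate of $n$-two-step up transitions. Arrivals occur at rate $\l$ (the long-run arrival rate), and a fraction $a_{n-1}$ of arrivals see $n-1$ in the system; such an arrival starts the segment, putting the system in state $n$ with a fresh residual inter-arrival time distributed as the (ordinary) inter-arrival time $G$, while one of the $n$ exponential($\m$) service clocks is running. Wait — with LCFS-PR or any work-conserving rule only one service is in progress, so the competing events are: the next arrival (time $\sim G$) versus the current service completion ($\sim \exp(\m)$). The segment completes as an up-segment iff the arrival wins, i.e., with probability $G^*(\m)$ (here I use the Preliminaries: $F^*(\l)=\P(X<Y_\l)$, so $\P(G<\exp(\m))=G^*(\m)$). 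Hence the rate of $n$-two-step up transitions is $\l\, a_{n-1}\, G^*(\m)$.

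Next, the rate of $n$-two-step down transitions. Departures occur at rate $\l$ as well (departure rate equals arrival rate in steady state), and a fraction $a_n$ of departures leave $n$ behind (by PASTA-type reasoning for G/M/1, or directly: the fraction of departures leaving $n$ equals $a_n$). Just after such a departure the system is in state $n$ with $n$ running exponential clocks and a residual inter-arrival time whose conditional distribution is $R$ (by definition of $R_n=R$, using Theorem~\ref{GM1Rn}). The segment completes as a down-segment iff the next service completion ($\sim\exp(\m)$) beats the residual arrival ($\sim R$), i.e., with probability $1-R^*(\m)$ (again by the Preliminaries interpretation of the LST at $\m$, now for the residual's CDF). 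So the rate of $n$-two-step down transitions is $\l\, a_n\,(1-R^*(\m))$.

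Equating the two rates gives $a_{n-1}G^*(\m)=a_n(1-R^*(\m))$, i.e., $a_n/a_{n-1}=G^*(\m)/(1-R^*(\m))=\s$ for all $n\ge 1$, which is the first equality. For the second, $\pi_{n+1}/\pi_n=a_n/a_{n-1}$, I would invoke the standard relation between time-stationary and arrival-stationary probabilities in the G/M/1 queue (each $\pi_n$ with $n\ge 1$ is proportional to $a_{n-1}$ via the rate-conservation/level-crossing identity $\l a_{n-1}=\m\pi_n$, which itself follows from Example~\ref{RBE} applied at level $n$: up-crossings of level $n-\tfrac12$ occur at rate $\l a_{n-1}$, down-crossings at rate $\m\pi_n$), whence $\pi_{n+1}/\pi_n=a_n/a_{n-1}=\s$.

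The main obstacle I anticipate is the bookkeeping needed to justify that at the start of a down-segment the residual inter-arrival time is genuinely distributed according to $R$ \emph{and} independent of the exponential service clocks in the relevant way, so that $\P(\exp(\m)<\text{residual})=1-R^*(\m)$ is exactly the per-departure segment-completion probability; this requires care that the ``departure leaves $n$ behind'' conditioning is consistent with the conditioning defining $R=R_n$, but since $R_n$ was defined as precisely the residual inter-arrival time at a departure instant given queue length $n$, and Theorem~\ref{GM1Rn} removed the $n$-dependence, this should go through cleanly. A secondary point is confirming the work-conserving single-server structure really makes ``one exponential clock at rate $\m$'' the correct competing service event in both directions, which is immediate for a single server.
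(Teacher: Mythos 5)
Your proposal is correct and follows essentially the same route as the paper: the two-step-transition balance $a_{n-1}\l\,G^*(\m)=a_n\l\,(1-R^*(\m))$ at level $n$, with Theorem~\ref{GM1Rn} justifying the use of $R$ for the residual inter-arrival time at a departure, and the level-crossing identity $\l a_{n-1}=\m\pi_n$ for the second equality. The only cosmetic slip is the reference to ``$n$ running exponential clocks,'' which you correctly resolve yourself since the single server means exactly one $\exp(\m)$ clock competes in both directions.
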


\begin{proof}
In order to initiate an $n$-two-step up transition, there must be a transition from state $n-1$ to state $n$, i.e., an arrival who finds $n-1$ customers in the system. Of course, such an arrival may be followed by a  departure, and in that case an $n$-two-step up transition will not occur. Otherwise, if this arrival is followed by an additional arrival, this will indeed form an $n$-two-step up transition. Hence, the $n$-two-step up transition rate is equal to the rate of arrivals who find $n-1$ customers in the system, $a_{n-1} \l$, multiplied by the probability that such arrival will be followed by an additional arrival before an exponential service completion, which is $\int_{0}^{\infty}{e^{-\m x}\,dG(x)} =G^*(\m)$.
In a similar way,
the $n$-two-step down transition rate is equal to the rate of departures who leave behind $n$ customers, that (by level crossing) equals to the rate of arrivals who find $n$ customers in the system, $a_n \l$, multiplied by the probability that such departure will be followed by a consecutive departure before the next arrival. Theorem \ref{GM1Rn} implies that the distribution of the residual inter-arrival time at the moment of departure, is equal to $R$ with LST $R^*$. Hence the probability that a consecutive exponential departure takes place before the next arrival equals $\int_{0}^{\infty}{(1-e^{-\m x})\,dR(x)}=1-R^*(\m)$.
Summarizing, by RBP,

\begin{equation}
a_{n-1}\l G^*(\m)=a_{n}\l (1-R^*(\m)),~~~n\ge 1,
\end{equation}
which leads to

\begin{equation}
a_{n}/a_{n-1} = G^*(\m)/(1-R^*(\m)),~~~n\ge 1.
\end{equation}
This completes proof for the first part of theorem. The second part immediately follows by observing that
the rate of arrivals who find $n-1$ customers in the system, $a_{n-1} \l$, equals (by level crossing) to the rate of departures who leave behind $n-1$ customers, $\pi_n \m$.
\end{proof}

The above theorem implies the following corollary (see, e.g., p. 100 in \cite{Hav13}).
\begin{corollary} \label{pidiscoro}
	The fact that $\pi_0=1-\r$ and \eqref{pirec} imply that
	\begin{equation} \label{arrdist}
	a_n=(1-\s)\s^n, ~~~~n\ge 0
	\end{equation}
	\begin{equation}
	\pi_n=\r (1-\s)\s^{n-1},~~~~ n\ge 1.
	\end{equation}
	Moreover, the geometric distribution in \eqref{arrdist} implies that under the FCFS service regime, the sojourn time is the sum of a geometric (random) number
 with parameter $1-\sigma$ of i.i.d exponential variables with parameter $\m$, the distribution of which is exponential with rate $\m(1-\s)$.
\end{corollary}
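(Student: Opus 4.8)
The corollary has three parts, and the plan is to establish them in sequence. First I would verify the formula $a_n = (1-\sigma)\sigma^n$. From Theorem~\ref{pirecgm1} we have $a_n/a_{n-1} = \sigma$ for all $n \ge 1$, so by induction $a_n = \sigma^n a_0$. To pin down $a_0$, I would use normalization: $\sum_{n \ge 0} a_n = 1$, which forces $a_0 \sum_{n\ge 0}\sigma^n = 1$, i.e. $a_0 = 1-\sigma$ (this also implicitly uses $\sigma < 1$, which follows from stability). Then I would turn to $\pi_n$ for $n \ge 1$. The relation $\pi_{n+1}/\pi_n = \sigma$ from \eqref{pirec} gives $\pi_n = \sigma^{n-1}\pi_1$, so everything reduces to identifying $\pi_1$. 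Here I would invoke the level-crossing identity already used in the proof of Theorem~\ref{pirecgm1}, namely that the rate of arrivals seeing $n-1$ in system equals the rate of departures leaving $n-1$ behind: $a_{n-1}\lambda = \pi_n \mu$. Taking $n=1$ gives $\pi_1 = (\lambda/\mu) a_0 = \rho(1-\sigma)$, and hence $\pi_n = \rho(1-\sigma)\sigma^{n-1}$ for $n \ge 1$. The hypothesis $\pi_0 = 1-\rho$ is the standard PASTA/utilization fact and is taken as given in the statement.

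For the final (sojourn-time) claim, I would work under FCFS and condition on the number $N$ a tagged arriving customer finds in the system, which by the arrival distribution is $P(N = n) = (1-\sigma)\sigma^n$, a geometric random variable. Under FCFS the customer's sojourn time is her own (exponential, rate $\mu$) service requirement plus the sum of the (residual plus full) service times of the $N$ customers ahead of her; but since the services are memoryless, the residual of the one in service is again exponential with rate $\mu$, so the sojourn time is exactly a sum of $N+1$ i.i.d.\ $\text{exp}(\mu)$ random variables, all independent of $N$. Thus the sojourn time is a geometric ($1-\sigma$) sum — with the convention that ``geometric number'' here counts $N+1 \ge 1$ — of i.i.d.\ $\text{exp}(\mu)$ variables.

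To finish, I would compute the distribution of this random sum, most cleanly via LSTs. If $S$ denotes the sojourn time, conditioning on $N$ gives
\begin{equation}
S^*(s) = \sum_{n\ge 0}(1-\sigma)\sigma^n\left(\frac{\mu}{\mu+s}\right)^{n+1} = \frac{(1-\sigma)\mu}{\mu+s}\cdot\frac{1}{1-\sigma\mu/(\mu+s)} = \frac{(1-\sigma)\mu}{\mu+s-\sigma\mu} = \frac{\mu(1-\sigma)}{\mu(1-\sigma)+s},
\end{equation}
which is precisely the LST of an exponential random variable with rate $\mu(1-\sigma)$. By uniqueness of the LST, $S \sim \text{exp}(\mu(1-\sigma))$, as claimed.

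The main obstacle is not any single computation — each step is short — but rather being careful about the bookkeeping: making sure stability is what legitimizes $\sigma<1$ and the geometric sums, and making sure the memorylessness argument for the residual service of the customer in service is invoked correctly so that the sojourn time really is a clean i.i.d.\ sum rather than a sum with one differently-distributed term. Once that is pinned down, the LST collapse is routine.
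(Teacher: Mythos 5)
Your proof is correct; note that the paper itself gives no argument for this corollary (it simply cites p.~100 of Haviv's textbook), so the comparison is with the standard route suggested by the statement's wording. There the formula for $\pi_n$ is meant to come from the hypothesis $\pi_0=1-\rho$ together with \eqref{pirec}: since $\pi_n=\pi_1\sigma^{n-1}$ for $n\ge 1$, normalizing $\sum_{n\ge 1}\pi_n=\rho$ gives $\pi_1=\rho(1-\sigma)$. You instead pin down $\pi_1$ through the level-crossing identity $a_0\lambda=\pi_1\mu$ (which the paper has already used in proving Theorem~\ref{pirecgm1}), after getting $a_0=1-\sigma$ from normalizing the embedded distribution. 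This is a legitimate variant and in fact shows that the hypothesis $\pi_0=1-\rho$ is not needed for the two displayed formulas — it buys a slightly more self-contained derivation, while the paper's route trades the level-crossing step for the familiar utilization fact. Your handling of $\sigma<1$ (forced by the existence of a proper stationary distribution under stability, since $a_n=\sigma^n a_0$ must be summable with $a_0>0$) is the right justification. The sojourn-time part is the classical argument and matches what the paper intends: under FCFS the tagged customer's delay is the sum of $N+1$ service quantities, where $N$ is the geometric number found in system, and memorylessness is exactly what makes the residual service of the customer in service another $\exp(\mu)$ term independent of $N$, so the sum is a genuine i.i.d.\ geometric compound; your LST computation then correctly collapses it to $\exp(\mu(1-\sigma))$, and your explicit remark that the ``geometric number'' counts $N+1\ge 1$ with success probability $1-\sigma$ resolves the only bookkeeping ambiguity in the statement.
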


The theorem below is the ``usual'' characterization of $\sigma$ (see, e.g., p. 100 in \cite{Hav13}), for which
we now suggest a new proof.
\begin{theorem}
	$\s$ is the unique value obeying  	
	\begin{equation} \label{sigma}
	\s=G^*(\m(1-\s))
	\end{equation}
	and $0<\s<1$.
\end{theorem}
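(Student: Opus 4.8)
The proof has two parts: first show that $\sigma = G^*(\mu(1-\sigma))$, and then show uniqueness of the solution in $(0,1)$. For the first part, I would combine the characterization $\sigma = G^*(\mu)/(1-R^*(\mu))$ from Theorem~\ref{pirecgm1} with an expression for $R^*$, the LST of the residual inter-arrival time at a departure instant. The natural tool is the preliminary material in Section~\ref{notations}: conditioned on the queue length being $n \geq 1$ just after a departure, the residual inter-arrival time is distributed as the overshoot of an inter-arrival time $G$ over an independent $\exp(\mu)$ clock (the service time of the next customer to depart, under LCFS-PR as in the proof of Theorem~\ref{GM1Rn}), \emph{given} that the inter-arrival time is larger. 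That is exactly the random variable $X - Y_\mu$ given $X \geq Y_\mu$ with $X \sim G$, so $R = D_{\mu, G}$ and hence by \eqref{DLST},
\begin{equation}
R^*(s) = \frac{\mu}{1 - G^*(\mu)} \, \frac{G^*(s) - G^*(\mu)}{\mu - s}.
\end{equation}

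**Deriving the fixed-point equation.** Substituting $s = \mu(1-\sigma)$ into this formula gives $R^*(\mu(1-\sigma))$ in terms of $G^*(\mu(1-\sigma))$ and $G^*(\mu)$. Then $1 - R^*(\mu)$ is obtained either by taking $s \to \mu$ in the same formula (using that $R^*(\mu) = 1 - \mu \bar d_{\mu,G} \cdot$(something) — more cleanly, via \eqref{DMEAN}, $\bar d_{\mu,G} = \overline{x}/(1-G^*(\mu)) - 1/\mu$, but I only need $1 - R^*(\mu)$, and the cleanest route is that $\sigma(1 - R^*(\mu)) = G^*(\mu)$ is already given). So I plug $\sigma = G^*(\mu)/(1-R^*(\mu))$ and the explicit $R^*$ into the candidate identity $\sigma = G^*(\mu(1-\sigma))$ and verify it reduces to a true statement. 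Concretely: from Theorem~\ref{pirecgm1}, $1 - R^*(\mu) = G^*(\mu)/\sigma$; and evaluating the $R^*$ formula requires care at $s=\mu$, but I can instead argue directly. Set $s = \mu(1-\sigma)$; then $\mu - s = \mu\sigma$ and
\begin{equation}
R^*(\mu(1-\sigma)) = \frac{\mu}{1-G^*(\mu)} \cdot \frac{G^*(\mu(1-\sigma)) - G^*(\mu)}{\mu\sigma} = \frac{G^*(\mu(1-\sigma)) - G^*(\mu)}{\sigma(1 - G^*(\mu))}.
\end{equation}
If the claim $\sigma = G^*(\mu(1-\sigma))$ holds, the right side becomes $(\sigma - G^*(\mu))/(\sigma(1-G^*(\mu)))$, and I would check this equals some quantity I can independently identify; the consistent approach is to close the loop using a probabilistic identity relating $R^*(\mu(1-\sigma))$ to the geometric queue-length distribution $a_n = (1-\sigma)\sigma^n$ from Corollary~\ref{pidiscoro}. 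I expect the cleanest derivation actually runs the other way: use the standard embedded-chain argument that $a_{n+1}/a_n = \sigma$ where $\sigma$ is the probability that a newly arriving customer sees the server idle before the next arrival given the server was busy — no, the honest statement is that $\sigma = \sum_k (\text{prob. }k\text{ services complete in an inter-arrival time}) \sigma^k = G^*(\mu(1-\sigma))$ by conditioning on the Poisson-$\mu$ number of potential service completions during one inter-arrival interval. So the real content is to re-derive this via the residual representation, substituting $R = D_{\mu,G}$ into $\sigma = G^*(\mu)/(1-R^*(\mu))$ and simplifying.

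**Uniqueness.** For the second part, I would use a standard convexity argument: the map $g(\sigma) := G^*(\mu(1-\sigma))$ on $[0,1]$ satisfies $g(0) = G^*(\mu) \in (0,1)$ (since $G$ is a genuine inter-arrival distribution with positive mean), $g(1) = G^*(0) = 1$, $g$ is strictly increasing in $\sigma$, and $g$ is convex (as $G^*$ is the LST of a positive random variable, hence convex and decreasing, composed with the affine decreasing map $\sigma \mapsto \mu(1-\sigma)$). A strictly convex increasing function on $[0,1]$ with $g(0) > 0$ and $g(1) = 1$ crosses the diagonal $y = \sigma$ at exactly one point in $(0,1)$ provided $g'(1) > 1$, which is the stability condition $\lambda < \mu$ (indeed $g'(1) = \mu \overline{x}_{G} = \mu/\lambda > 1$); if instead $g'(1) \le 1$ the only fixed point in $[0,1]$ is $\sigma = 1$, which stability rules out. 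I would state this cleanly and note the fixed point in $(0,1)$ is unique.

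**Main obstacle.** The technical crux is justifying the residual-inter-arrival representation $R = D_{\mu,G}$ rigorously — i.e., that under LCFS-PR the inter-arrival interval straddling a departure, seen from the departure instant, is distributed as $X-Y_\mu \mid X \geq Y_\mu$. One must argue that a departure (leaving $n \ge 1$ behind) occurs precisely when an exponential-$\mu$ service clock expires before the current inter-arrival interval ends, and that by memorylessness and PASTA-type reasoning the age-vs-residual split is exactly the conditioning in Section~\ref{notations}; the $n$-independence is already secured by Theorem~\ref{GM1Rn}. Once that representation is in hand, everything else is the algebra above plus the convexity argument, both routine.
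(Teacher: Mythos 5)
Your uniqueness argument is fine (and matches what the paper delegates to standard convexity arguments), but the first part has a genuine gap: the identification $R = D_{\mu,G}$ is false. A departure that leaves $n$ customers behind need not be the \emph{first} departure during the inter-arrival interval in progress; with probability $G^*(\mu)$ it is preceded by another departure inside the same interval, so the residual inter-arrival time is the overshoot over \emph{more} than one exponential clock. The correct relation is the G/M/1 specialization of \eqref{Rrecgmn1}, namely $R^*(s)=(1-G^*(\mu))D^*_{\mu,G}(s)+G^*(\mu)D^*_{\mu,R}(s)$, whose solution is $R = D_{\mu(1-\sigma),G}$ (Theorem~\ref{resmg1dist}), i.e.\ the exponential clock to condition on is the FCFS sojourn time with rate $\mu(1-\sigma)$, not a single service time with rate $\mu$. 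Substituting $D^*_{\mu,G}$ into $\sigma = G^*(\mu)/(1-R^*(\mu))$ does not yield $G^*(\mu(1-\sigma))$ in general; it happens to work for M/M/1 (where $D_{\mu,G}=G$ by memorylessness), which may be the source of the confusion, but already for deterministic inter-arrivals the two candidate residual distributions differ. Moreover, as written you never complete the derivation: you assume \eqref{sigma}, look for a consistency check, and then pivot to the classical embedded-chain/Poisson-counting identity $\sigma=\int e^{-\mu(1-\sigma)t}\,dG(t)$, which is a different (and here unproven) argument.

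Your plan is repairable: Theorem~\ref{resmg1dist} is proved in the paper using only Corollary~\ref{pidiscoro} (exponential sojourn time under FCFS), not \eqref{sigma}, so there is no circularity in using it; writing $x=G^*(\mu(1-\sigma))$, equation \eqref{DLST} gives $R^*(\mu)=\frac{(1-\sigma)(x-G^*(\mu))}{\sigma(1-x)}$, and combining with $\sigma(1-R^*(\mu))=G^*(\mu)$ from Theorem~\ref{pirecgm1} yields $(\sigma-x)(1-G^*(\mu))=0$, hence \eqref{sigma}. The paper itself takes a different and simpler route: it interprets both sides of \eqref{sigma} as the probability that an arriving customer finds a non-empty system, the left side being $1-a_0=\sigma$ and the right side being the probability that the previous customer's exponential $\mu(1-\sigma)$ sojourn time exceeds the inter-arrival time, with no transform computation at all.
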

\begin{proof}
	We show that the two sides of \eqref{sigma} are probabilities of the same event. The left hand side is the probability of the event that an arbitrary arrival finds a non-empty queue, i.e., $1-a_0=\s$. Now, assume w.l.g that the service regime is FCFS. In that case, the sojourn time is exponential with rate $\m(1-\s)$ (see Corollary \ref{pidiscoro}). The event that an arbitrary arrival finds a non-empty queue is also equal to the event that the sojourn time of the previous arrival is greater than her inter-arrival time. The probability of this event equals $\int_0^\infty{e^{-\m(1-\s)x}\,dG(x)}=G^*(\m(1-\s))$, which is the right hand side of \eqref{sigma}. The uniqueness of the solution of (\ref{sigma}) can be argued using standard convexity arguments (see e.g. p. 101 in \cite{Hav13}).
\end{proof}

\begin{theorem} \label{resmg1dist}
	The steady-state CDF of the residual inter-arrival time at a departure instance equals $D_{\m(1-\s),G}$.
\end{theorem}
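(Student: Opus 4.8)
The plan is to decompose the departure stream according to inter-arrival intervals and to read off the residual distribution as a ratio of long-run rates. Fix attention on a ``typical'' inter-arrival interval. Because the arrival process is a renewal process, its length $X\sim G$ is independent of the number of customers $m\ge 1$ present just after the arrival that opens the interval; and because service is exponential, the memorylessness of service makes the successive would-be service completions inside the interval occur at the partial sums $S_j=E_1+\cdots+E_j$ of i.i.d.\ $\exp(\m)$ variables $E_1,E_2,\dots$, this sequence being independent of $m$ and of $X$. A departure occurs inside the interval at position $j$ precisely when $j\le m$ and $S_j\le X$, and its residual inter-arrival time is then $X-S_j$. By Corollary~\ref{pidiscoro}, the opening arrival sees a queue with distribution $a_i=(1-\s)\s^i$, so $m$ is one plus that, and $P(m\ge j)=\sum_{i\ge j-1}a_i=\s^{j-1}$.

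Next I would express $\bar R(w):=1-R(w)$ through the rates of the departure point process: the total departure rate equals the arrival rate $\l$ (throughput equals input rate in a stable queue), while the rate of departures whose residual exceeds $w$ equals $\l$ times the expected number, per interval, of positions $j$ with $j\le m$ and $S_j+w\le X$. Hence, interchanging sum and expectation by nonnegativity,
\[
\bar R(w)=\sum_{j\ge 1}P(m\ge j)\,P(S_j+w\le X)=\sum_{j\ge1}\s^{j-1}P(S_j+w\le X).
\]
Conditioning on $X$ and writing $P(S_j\le t)=P(N_t\ge j)$ for $N_t\sim\text{Poisson}(\m t)$, a geometric summation gives $\sum_{j\ge1}\s^{j-1}P(N_t\ge j)=(1-E[\s^{N_t}])/(1-\s)=(1-e^{-\m(1-\s)t})/(1-\s)$. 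Taking $t=X-w$ (the summand vanishing unless $X>w$) yields
\[
\bar R(w)=\frac{1}{1-\s}\,E\!\left[(1-e^{-\m(1-\s)(X-w)})\,\mathbf 1\{X>w\}\right]=\frac{P(X>Y_{\m(1-\s)}+w)}{1-\s},
\]
where $Y_{\m(1-\s)}\sim\exp(\m(1-\s))$ is taken independent of $X$.

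Finally I would recognise the right-hand side as the survival function of $D_{\m(1-\s),G}$. By \eqref{sigma}, $1-\s=1-G^*(\m(1-\s))=P(X>Y_{\m(1-\s)})$, so the displayed quantity equals $P(X-Y_{\m(1-\s)}>w\mid X\ge Y_{\m(1-\s)})$, which by the definition of $D_{\l,F}$ in Section~\ref{notations} is exactly $1-D_{\m(1-\s),G}(w)$; this proves the theorem. (Equivalently, the same interval decomposition applied to $R^*(s)=E[e^{-sR}]$ gives $R^*(s)=\sum_{j\ge1}\s^{j-1}E[e^{-s(X-S_j)}\,\mathbf 1\{S_j\le X\}]$, and summing the geometric series in $j$ before integrating produces $R^*(s)=\m(G^*(s)-\s)/(\m(1-\s)-s)$, which is $D^*_{\m(1-\s),G}(s)$ by \eqref{DLST} together with \eqref{sigma}; the apparent pole at $s=\m(1-\s)$ is removable on both sides.)

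The main obstacle is the careful justification of the independence structure on a typical inter-arrival interval --- that $m$, $X$, and the service-completion epochs $(S_j)_{j\ge1}$ are mutually independent, with $(S_j)$ having i.i.d.\ $\exp(\m)$ increments whatever the (work-conserving, non-anticipating) service discipline --- which rests on the renewal property of the arrival stream and the memorylessness of exponential service, together with the correct Palm (``per interval'') weighting that makes $m$ distributed as one plus what a typical arrival sees. Once this is set up, the remaining steps --- the rate identity for $\bar R(w)$, the geometric summation, and the identification of $D_{\m(1-\s),G}$ via \eqref{sigma} --- are routine.
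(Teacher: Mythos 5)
Your argument is correct, but it is a genuinely different route from the paper's. The paper tags a customer, assumes FCFS w.l.o.g., and uses the exponential (rate $\m(1-\s)$) sojourn time from Corollary~\ref{pidiscoro}: if the sojourn $S$ ends before the next arrival $A$ the residual is $A-S$, and if not, memorylessness of $S$ restarts the situation, so the law of total probability gives $P(R<x)=P(A-S<x|S<A)P(S<A)+P(R<x)P(S>A)$, which is solved in one line to give $R=D_{\m(1-\s),G}$ without ever invoking \eqref{sigma}. You instead perform a renewal-reward computation over a typical inter-arrival interval: the geometric law $a_n=(1-\s)\s^n$ gives $P(m\ge j)=\s^{j-1}$, the exponential services give Poisson would-be departure epochs, and the geometric summation $E[\s^{N_t}]=e^{-\m(1-\s)t}$ produces the effective rate $\m(1-\s)$, after which \eqref{sigma} identifies the normalization $1-\s=P(X\ge Y_{\m(1-\s)})$ and hence the survival function of $D_{\m(1-\s),G}$. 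The paper's proof is shorter and essentially computation-free, but leans on the FCFS reduction and on the regenerative/memoryless structure of the sojourn time; yours is more computational and requires the careful independence and Palm-weighting justifications you flag (independence of $m$, $X$ and the service epochs, departures at $S_j$ iff $j\le m$ and $S_j\le X$ under work conservation, and arrival-rate-equals-departure-rate), but in exchange it makes the emergence of the rate $\m(1-\s)$ transparent as geometric thinning of the Poisson service stream, needs only the geometric $a_n$ rather than the exponential sojourn-time distribution, and delivers the explicit transform \eqref{Rstar} as a by-product. Both proofs rest on results established earlier in the section, so neither is circular.
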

\begin{proof}
	Assume w.l.g the FCFS service regime. Under this regime, the sojourn time is exponential with rate $\m(1-\s)$ (see Corollary \ref{pidiscoro}). Tag a customer and let $S$ to be her sojourn time, $A$ be the next inter-arrival time after her arrival, and $R$ be the residual inter-arrival time at her departure. Of course, $A$ and $S$ are independent. If $S<A$, then $R=A-S$. Otherwise, if $S>A$, the memoryless property of $S$ implies that the remaining sojourn time at the moment of the next arrival, $S-A$, is again distributed as $S$ and that implies that the distribution of the residual at the moment of departure is the same as the distribution of $R$. The above and the law of total probability imply that,
	$$
	P(R<x)=P(A-S<x|S<A)P(S<A)+P(R<x)P(S>A).
	$$
	Solving for $P(R<x)$ gives
	$$
	P(R<x)=P(A-S<x|S<A)
	$$
	as required.
\end{proof}
\begin{remark}
	Theorem~\ref{resmg1dist} implies that $R^*(s)=D^*_{\m(1-\s),G}(s)$. Equations \eqref{DLST} and \eqref{sigma} implies that,
	\begin{equation} \label{Rstar}
	R^*(s)=\frac{\m(G^*(s)-\s)}{\m(1-\s)-s}.
	\end{equation}
	In particular, $R^*(\m)=1-\frac{G^*(\m)}{\s}$, which coincides with the definition of $\s$ in Theorem \ref{pirecgm1}.
\end{remark}
\begin{remark}
	Observe that $R_0$ is the idle period in this model. Hence \eqref{Rstar} gives the LTS of the idle period, which coincides with Theorem 1 in \cite{ABP2005}.
\end{remark}

\section{The G/Mn/1 queueing model} \label{GMn1}
Here we deal with a variation of the G/M/1 model where the service rate is queue length dependent. More formally, given that there are $n$ customers in the system at time $t$, the number of service completions within the time interval $[t,t+\Delta]$ is independent of the past. Moreover, the probability of a single service completion within this time interval equals $\m_n\Delta +o(\Delta)$ and the probability of two or more service completions equals $o(\Delta)$. The arrival process is as in the G/M/1 model. Except for the service rates, we use the same notations as done in the previous section.

In this section we use the RBP in order to show the following. First, we show that Theorem \ref{pirecgm1}
can be generalized to the G/Mn/1 model, where $\m_n$ replaces $\m$. Then, we show that the probability that a customer who leaves behind $n$ customers upon departure, is the first to depart during the current inter-arrival period equals $1-G^*(\m_{n+1})$ (which is the probability that an inter-arrival period exceeds an exponential random variable with parameter $\m_{n+1}$). Finally, we derive an original recursion for the CDF of $R_n$, $n \geq 1$. In general, this recursion does not have easily computable initial conditions. Yet, we show that this can be overcome when $\m_n$ becomes constant once $n$ is large enough.

\begin{theorem} \label{Gmn1pi}
		\begin{equation} \label{Gmn1pieq}
		\frac{\pi_{n+1}\m_{n+1}}{\pi_n\m_{n}}=\frac{a_{n}}{a_{n-1}}=\frac {G^*(\m_n)}{1-R_n^*(\m_n)},~n\ge 1.
		\end{equation}
\end{theorem}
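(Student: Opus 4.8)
The plan is to mimic the argument used for Theorem~\ref{pirecgm1}, applying the rate balance principle (more precisely, its TST specialization) to the $n$-two-step up and down transitions, but keeping careful track of the now state-dependent service rate $\mu_n$. First I would compute the rate of $n$-two-step up transitions: such a transition is initiated by an arrival that finds $n-1$ customers in the system, which occurs at rate $a_{n-1}\l$; conditioned on that, an $n$-two-step up transition is completed precisely when the next arrival precedes the (exponential, rate $\mu_n$) service completion that is now running, and this happens with probability $\int_0^\infty e^{-\mu_n x}\,dG(x) = G^*(\mu_n)$. Hence the rate is $a_{n-1}\l G^*(\mu_n)$.

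Next I would compute the rate of $n$-two-step down transitions. By level crossing, the rate of departures leaving behind $n$ customers equals the rate of arrivals finding $n$ customers, namely $a_n\l$. Conditioned on such a departure, an $n$-two-step down transition occurs iff a further (exponential, rate $\mu_n$) service completion occurs before the next arrival; since the residual inter-arrival time at that departure instant has CDF $R_n$, this probability is $\int_0^\infty (1-e^{-\mu_n x})\,dR_n(x) = 1-R_n^*(\mu_n)$. Thus the rate is $a_n\l\bigl(1-R_n^*(\mu_n)\bigr)$. Equating the two rates via Theorem~\ref{RBP} gives
\begin{equation*}
a_{n-1}\l G^*(\mu_n) = a_n\l\bigl(1-R_n^*(\mu_n)\bigr),\quad n\ge 1,
\end{equation*}
which yields $a_n/a_{n-1} = G^*(\mu_n)/(1-R_n^*(\mu_n))$. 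For the remaining equality, I would again invoke level crossing at the arrival/departure interface: the rate of arrivals finding $n-1$ customers, $a_{n-1}\l$, equals the rate of departures leaving behind $n-1$ customers, which is $\pi_n\mu_n$. Applying this identity at both $n$ and $n-1$ gives $a_n/a_{n-1} = (\pi_{n+1}\mu_{n+1})/(\pi_n\mu_n)$, completing the chain.

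The one subtlety to be careful about — and the main thing that distinguishes this from the G/M/1 case — is that here $R_n$ genuinely depends on $n$, so the clean independence argument of Theorem~\ref{GM1Rn} (via LCFS-PR) is no longer available, and one must be content to express the ratio in terms of the as-yet-unknown $R_n^*(\mu_n)$. I would therefore present this theorem as the first step of the section's program and defer the actual determination of $R_n$ to the subsequent recursion. A minor point worth stating explicitly is that the residual inter-arrival time appearing in the down-transition probability is exactly the quantity whose CDF is defined to be $R_n$ (the residual at a departure instance conditioned on queue length $n$), so no extra justification is needed there beyond the definition.
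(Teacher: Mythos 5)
Your proposal is correct and follows essentially the same route as the paper: it equates the rates $a_{n-1}\l G^*(\m_n)$ and $a_n\l\bigl(1-R_n^*(\m_n)\bigr)$ of $n$-two-step up and down transitions via the RBP, and then uses level crossing ($a_{n-1}\l=\pi_n\m_n$) for the first equality. Your explicit remark that $R_n$ now genuinely depends on $n$, so the independence argument of Theorem~\ref{GM1Rn} is unavailable and the ratio must stay in terms of $R_n^*(\m_n)$, is exactly the point implicit in the paper's treatment.
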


\begin{proof}
	The same arguments as in the proof of Theorem \ref{pirecgm1} yield that the rates of $n$-two-step up and down transitions, $n\ge 1$, equal $a_{n-1}\l G^*(\m_n)$ and $a_{n}\l (1-R^*(\m_n))$, respectively. Equating these rates leads to the second equality in (\ref{Gmn1pieq}).
	Similarly, by level crossing we get that $a_{n-1}\l = \pi_n \m_n$, yielding the first equality in (\ref{Gmn1pieq}).
\end{proof}

\begin{corollary}\label{corGmn1pi}
For $n \ge 1$,
\[
a_n = a_0 \prod_{k=1}^n \frac {G^*(\m_k)}{1-R_k^*(\m_k)}, \quad
\pi_{n+1} = \pi_1 \frac{\mu_1}{\mu_{n+1}} \prod_{k=1}^n  \frac {G^*(\m_k)}{1-R_k^*(\m_k)}.
\]
\end{corollary}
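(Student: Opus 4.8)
The plan is to iterate the one-step recursions already established in Theorem~\ref{Gmn1pi}; the corollary is then a telescoping product and requires no new probabilistic argument. Throughout, write $c_k := G^*(\m_k)/(1-R_k^*(\m_k))$ for $k\ge 1$, so that \eqref{Gmn1pieq} reads $a_k/a_{k-1}=c_k$ and $(\pi_{k+1}\m_{k+1})/(\pi_k\m_k)=c_k$ for every $k\ge1$.

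\textbf{The $a_n$ formula.} For $n\ge1$, multiply the identities $a_k/a_{k-1}=c_k$ over $k=1,\dots,n$ and cancel the common factors:
\[
\frac{a_n}{a_0}=\prod_{k=1}^n \frac{a_k}{a_{k-1}}=\prod_{k=1}^n c_k ,
\]
which is exactly $a_n=a_0\prod_{k=1}^n G^*(\m_k)/(1-R_k^*(\m_k))$. (For $n=1$ this is just the $n=1$ case of Theorem~\ref{Gmn1pi}, and the empty-product convention makes the statement degenerate to $a_0=a_0$.)

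\textbf{The $\pi_{n+1}$ formula.} Similarly, for $n\ge1$ telescope the product of the second ratio in \eqref{Gmn1pieq}:
\[
\frac{\pi_{n+1}\m_{n+1}}{\pi_1\m_1}=\prod_{k=1}^n\frac{\pi_{k+1}\m_{k+1}}{\pi_k\m_k}=\prod_{k=1}^n c_k ,
\]
and solving for $\pi_{n+1}$ gives $\pi_{n+1}=\pi_1\,(\m_1/\m_{n+1})\prod_{k=1}^n G^*(\m_k)/(1-R_k^*(\m_k))$, as claimed. (Equivalently, both identities follow by a one-line induction on $n$ with Theorem~\ref{Gmn1pi} as the inductive step.)

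\textbf{Main obstacle.} There is essentially none: the content is entirely carried by Theorem~\ref{Gmn1pi}, and what remains is bookkeeping. The only points worth a moment's care are the index alignment---the $k$-th factor carries the service rate $\m_k$ and the residual transform $R_k^*$, matching the $n$-dependence of the right-hand side of \eqref{Gmn1pieq}---and the boundary case $n=1$ (equivalently the empty product at $n=0$), which is immediate.
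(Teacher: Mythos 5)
Your proposal is correct: the corollary follows from Theorem~\ref{Gmn1pi} exactly by the telescoping (or one-line induction) argument you give, with the index alignment you note, and this is precisely the immediate iteration of \eqref{Gmn1pieq} that the paper leaves implicit (no separate proof is provided there). Nothing further is needed.
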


The next theorems give a recursion for $R^*_n$, $n\ge 0$, that is required in order to use the result of Corollary \ref{corGmn1pi}. The analysis here is a dual version of the analysis of the Mn/Gn/1 model in the next section.

\begin{lemma}\label{firstGMn1}
	The probability that a departure who leaves behind $n\ge 0$ customers in the system, is the first to depart during the current inter-arrival time equals $1-G^*(\m_{n+1})$. In particular, this probability equals the probability that an inter-arrival time who commences when $n+1$ customers are in the system will end after completion of the customer currently in service.
\end{lemma}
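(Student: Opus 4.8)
The plan is to translate the assertion about departures into an equivalent assertion about arrivals, using a level-crossing identity together with the memoryless structure of the (state-dependent) service mechanism, and then to evaluate the relevant probability directly. First I would reformulate the event on the sample-path level. Fix $n\ge 0$ and consider a departure that leaves behind $n$ customers. This departure is the first one during the current inter-arrival time precisely when the queue length stays equal to $n+1$ throughout the interval between the arrival that opened this inter-arrival time and the departure itself, dropping to $n$ only at that departure; in particular, the opening arrival finds exactly $n$ customers in the system. Conversely, whenever an arrival that finds $n$ customers is followed by a service completion before the next arrival, that completion is a departure leaving behind $n$ customers and is the first departure of that inter-arrival time. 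This gives a rate-preserving correspondence between (i) departures leaving behind $n$ that are first in their inter-arrival time and (ii) arrivals finding $n$ customers that are followed by a service completion before the next arrival.

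Next I would compute the rate of events of type (ii). Immediately after an arrival that finds $n$ customers there are $n+1$ customers present, and while the queue length remains $n+1$ the time until the next service completion is, by the model's assumption on $\m_n$ and the memoryless property, exponentially distributed with rate $\m_{n+1}$ and independent of the forward inter-arrival time, which is distributed as $G$. Letting $E\sim\text{exp}(\m_{n+1})$ and $A\sim G$ be independent,
$$P(E<A)=\int_0^\infty \bigl(1-e^{-\m_{n+1}t}\bigr)\,dG(t)=1-G^*(\m_{n+1}).$$
Since arrivals that find $n$ customers occur at rate $\l a_n$, events of type (ii), and hence of type (i), occur at rate $\l a_n\bigl(1-G^*(\m_{n+1})\bigr)$.

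Finally I would divide by the appropriate total rate. By level crossing between $\{0,1,\dots,n\}$ and its complement (Example \ref{RBE}), the rate of \emph{all} departures leaving behind $n$ customers equals the rate $\l a_n$ of arrivals finding $n$ customers. Dividing the rate of type-(i) events by this total rate gives the claimed conditional probability $1-G^*(\m_{n+1})$. The ``in particular'' statement is then immediate: $1-G^*(\m_{n+1})=P(A>E)$ is exactly the probability that an inter-arrival time which commences with $n+1$ customers present --- during which the service-completion clock runs at rate $\m_{n+1}$ --- outlasts the (exponential) remaining service of the customer then in service.

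The step requiring the most care is the claim in the second paragraph that the time to the first post-arrival service completion is genuinely exponential with rate $\m_{n+1}$ and independent of the forward inter-arrival time; this is precisely where the Markovian/state-dependent structure of the service process and the independence of the arrival and service processes enter, and it must be phrased relative to the window during which the queue length is constant at $n+1$. The rate statements themselves are to be read in the renewal-reward/level-crossing sense used throughout the paper.
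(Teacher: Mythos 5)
Your argument is correct, but it does not follow the paper's primary proof, which stays inside the rate-balance machinery: there one takes the rate of $(n\!+\!1)$-two-step up transitions, $a_n\l G^*(\m_{n+1})=\pi_{n+1}\m_{n+1}G^*(\m_{n+1})$ (already computed in the proof of Theorem \ref{Gmn1pi}), and equates it, via Theorem \ref{RBP}, with the rate of $(n\!+\!1)$-two-step down transitions written as $\pi_{n+1}\m_{n+1}$ times the probability that a departure leaving behind $n$ is \emph{not} the first in its inter-arrival time; solving for that probability gives the lemma with no new computation. Your route bypasses the TST step entirely: you identify first departures leaving behind $n$ with arrivals finding $n$ that are followed by a service completion before the next arrival, compute that rate directly as $\l a_n\bigl(1-G^*(\m_{n+1})\bigr)$ by racing the exponential rate-$\m_{n+1}$ completion clock against a fresh inter-arrival time, and divide by the total rate $\l a_n$ of departures leaving behind $n$ obtained from ordinary level crossing (Example \ref{RBE}). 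This is essentially the paper's own alternative proof in Remark \ref{altproof}: your type-(ii) events are its inter-arrival periods initiated with $n+1$ customers that contain at least one departure (its $\a_n^+$), and your level-crossing identity is its $\delta_n=\a_n$. What the paper's main proof buys is that it showcases the two-step balance principle being advertised and reuses quantities already derived; what your (and the remark's) argument buys is independence from that machinery and an explicit sample-path correspondence, at the cost of having to justify carefully --- as you rightly flag --- that during the window in which the queue sits at level $n+1$ the time to the next completion is exponential with rate $\m_{n+1}$ and independent of the forward inter-arrival time.
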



\begin{proof}
We use the same TST argument as in the proof of Theorem \ref{Gmn1pi} (and Theorem \ref{pirecgm1}). We already argued that the rate of $n\!+\!1$-two-step up transitions equals $a_{n}\l G^*(\mu_{n+1}) = \pi_{n+1} \mu_{n+1} G^*(\mu_{n+1})$, but the rate of $n\!+\!1$-two-step down transitions can now be argued differently than before. An $n\!+\!1$-two-step down transition occurs when and only when a departure leaves behind $n$ customers in the system, and she is not the first to depart during the current inter-arrival time. Hence the rate of $n\!+\!1$-two-step down transitions equals the rate of departures leaving behind $n$ customers, $\pi_{n+1} \mu_{n+1}$, times the probability that such departure is not the first to depart during the current inter-arrival time, i.e., one minus the probability of being the first. Now, the result follows by equating the two-step up and down transition rates.
\end{proof}
\\

\begin{remark}\label{altproof} {\bf Alternative proof of Lemma \ref{firstGMn1}}\\
For each departure who leaves behind $n$ customers in the system, there is an arrival who finds $n$ customers in the system, or, in different words, an inter-arrival time that was initiated with $n+1$ customers. Hence, the rate of such departures, denoted by $\delta_n$, and the rate of such arrivals, denoted by $\a_n$, are equal. Furthermore, for each departure who leaves behind $n$ customers in the system and is the first to depart during the current inter-arrival time, there is an inter-arrival time that was initiated with $n+1$ customers, and had at least one departure during it. Hence, the rate of such first departures, denoted by $\delta_n^f$, and the rate of such inter-arrival times, denoted by $\a_n^+$, are equal. The probability we are after equals the proportion of the rate of departures who leave behind $n$ and are first to depart out of the total rate of departures who leave behind $n$ customers. In the above notation it equals to ${\delta_n^f}/{\delta_n}$, and the explanation above implies that it equals to  ${\a_n^+}/{\a_n}$. The last term equals the probability that an inter-arrival time that was initiated with $n+1$ customers in the system, will have at least one departure during it. This probability equals the probability that a generic random variable with CDF $G$ is greater than an exponential random variable with rate $\m_{n+1}$, which is equal to $\int_{0}^{\infty}{(1-e^{-\m_{n+1} x)}\,dG(x)}= 1-G^*(\m_{n+1})$.
\end{remark}

\begin{theorem} \label{Rstarrec}
	\begin{equation} \label{Rrecgmn1}
	R^*_n(s) = (1 - G^*(\m_{n+1}))D^*_{\m_{n+1},G}(s)+G^*(\m_{n+1})D^*_{\m_{n+1},R_{n+1}}(s),~~n\ge 0
	\end{equation}
\end{theorem}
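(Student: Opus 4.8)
The plan is to condition a departure that leaves $n$ customers behind on whether or not it is the \emph{first} departure occurring during the inter-arrival period in which it takes place, and then to combine the two cases via the law of total probability (equivalently, by adding up rates as in the spirit of the RBP). By Lemma~\ref{firstGMn1}, the ``first'' event has probability $1-G^*(\m_{n+1})$ and the complementary event has probability $G^*(\m_{n+1})$, so these are exactly the mixing weights appearing in \eqref{Rrecgmn1}; what remains is to identify the conditional distribution of the residual inter-arrival time in each of the two cases as $D_{\m_{n+1},G}$ and $D_{\m_{n+1},R_{n+1}}$, respectively. The identity \eqref{Rrecgmn1} then follows by passing to LSTs, and the case $n=0$ is covered by the same argument (with $R_0$ being the idle period).

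For the first case, suppose the tagged departure is the first to depart during its inter-arrival period. Then no arrival and no departure occurred earlier in that period, so (as in Lemma~\ref{firstGMn1}) the period began with $n+1$ customers present, and by the queue-length-dependent Markovian service mechanism the residual service time of the customer in service at the start of the period is exponential with rate $\m_{n+1}$, independent of the length $A\sim G$ of the period. The tagged departure occurs when that service completes, conditioned on its completing before the period ends, i.e.\ conditioned on $Y_{\m_{n+1}}\le A$; the residual inter-arrival time at that instant is the excess $A-Y_{\m_{n+1}}$ given $A\ge Y_{\m_{n+1}}$, which by the definition in Section~\ref{notations} has CDF $D_{\m_{n+1},G}$.

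For the second case, suppose the tagged departure (leaving $n$ behind) is not the first in its period. Then the immediately preceding departure lies in the same period --- there is no arrival between the two --- and it leaves $n+1$ customers behind, so its residual inter-arrival time is distributed as $R_{n+1}$. Re-starting the clock at that preceding departure, a fresh exponential $\m_{n+1}$ service time elapses before the tagged departure, and ``the tagged departure falls in the same period'' is precisely the event that this exponential is smaller than the $R_{n+1}$-distributed residual; hence the residual inter-arrival time at the tagged departure is the corresponding excess, i.e.\ it has CDF $D_{\m_{n+1},R_{n+1}}$. Note that the conditioning reweights the $R_{n+1}$-residual of the preceding departure (by the factor $1-e^{-\m_{n+1}r}$), but this reweighting is already built into the definition of $D_{\cdot,\cdot}$, so no separate adjustment is needed.

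The main obstacle is the bookkeeping in this second case: one has to check carefully that ``not first in the period'' is equivalent to conditioning on the service fitting inside the preceding residual, that this conditioning reshapes the $R_{n+1}$-residual in exactly the way encoded by $D_{\m_{n+1},R_{n+1}}$, and that the residual inter-arrival time at the preceding departure is independent of the subsequent service time (which holds because the latter is a fresh memoryless clock of the queue-length-dependent server, independent of the arrival process). Once these points are secured, adding the two weighted contributions and using that $(1-G^*(\m_{n+1}))+G^*(\m_{n+1})=1$ yields \eqref{Rrecgmn1}; one may optionally cross-check the weights against the $(n{+}1)$-two-step balance used in the proof of Theorem~\ref{Gmn1pi}.
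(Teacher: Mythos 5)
Your argument is correct and is essentially the paper's own proof: condition a departure leaving $n$ behind on whether it is the first departure in its inter-arrival period, use Lemma~\ref{firstGMn1} for the weights $1-G^*(\m_{n+1})$ and $G^*(\m_{n+1})$, identify the conditional residuals as $D_{\m_{n+1},G}$ and $D_{\m_{n+1},R_{n+1}}$, and combine by the law of total probability. The extra bookkeeping you supply (the memoryless rate-$\m_{n+1}$ clock, its independence from the arrival process, and the fact that the conditioning is already encoded in the definition of $D_{\cdot,\cdot}$) just makes explicit what the paper leaves implicit.
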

\begin{proof}
	Consider a departure who leaves behind $n$ customers in the system and is the first to depart during the current inter-arrival time. The fact that this customer is the first to depart implies that the distribution of the residual inter-arrival is $D_{\m_{n+1},G}$. Now, consider an arrival who leaves behind $n$ customers in the system and is not the first to depart during the current inter-arrival time. That means that there is a customer that departed before her, and left behind $n+1$ customers. From the moment of the previous departure, a residual inter-arrival with distribution of $R_{n+1}$ was initiated, and the current departure is the first to depart during it. Hence, in this case, the distribution of the residual inter-arrival time is $D_{\m_{n+1},R_{n+1}}$. Finally, using the law of total probability along with the probability in Lemma~\ref{firstGMn1} completes the proof.
\end{proof}

\begin{remark}
The recursion in \eqref{Rrecgmn1} can be used in both directions. Of course, if $R^*_{n+1}$ is in hand, one can apply the recursion to get $R^*_n$. Likewise, if $R^*_n$ is in hand, one can solve \eqref{Rrecgmn1} for $D^*_{\m_n,R_{n+1}}$ and use \eqref{Fstars} to get $R^*_{n+1}$.
\end{remark}

The above theorem gives a recursion, but does not specify a starting point, i.e., $R_k$ for some $k\ge 0$ that can be used in order to apply \eqref{Rrecgmn1} recursively.  Unfortunately, for the general case, such starting point is not available, but the following theorem implies that the complexity of the computation of each $R_n$, $n\ge 0$, does not depend on $n$.

\begin{theorem} \label{GMn1resind}
Let $R_n$, $n \ge 0$ be the CDF of the conditional residual inter-arrival time at departure instance in a G/Mn/1 queueing model with service rates $\mu_n$. For $k\ge 0$, let $R^{(k)}_n$, $n\ge 0$, be the CDF of the residual inter-arrival time at departure instance, conditioned on queue length $n$, associated with a G/Mn/1 model with service rates $\m^{(k)}_n$, such that $\m^{(k)}_n=\m_{n+k}$, $n\ge 1$. Then, \begin{equation}
R_m^{(k-m)}=R^{(k)}_{0},~~~0\le m \le k.
\end{equation}
In particular,
$$
R_k=R^{(k)}_{0},~~~k\ge 0.
$$
\end{theorem}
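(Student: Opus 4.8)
The plan is to show that the recursion in Theorem \ref{Rstarrec} has a structure that depends only on the "tail" of the service-rate sequence starting from a given index, and to exploit this shift-invariance. The key observation is that equation \eqref{Rrecgmn1} expresses $R^*_n$ purely in terms of $R^*_{n+1}$ and the single service rate $\mu_{n+1}$; more precisely, for the model with rates $\mu_\cdot$, we have $R^*_n(s) = (1-G^*(\mu_{n+1}))D^*_{\mu_{n+1},G}(s) + G^*(\mu_{n+1})D^*_{\mu_{n+1},R_{n+1}}(s)$, and the same functional form with $\mu^{(k)}_\cdot$ in place of $\mu_\cdot$ holds for the shifted model. Since $\mu^{(k)}_n = \mu_{n+k}$ for $n \ge 1$, the recursion operator applied at level $n$ in the $k$-shifted model coincides with the recursion operator applied at level $n+k$ in the original model (at least for the indices $n \ge 1$, and for the initial step $n=0$ as well, provided one is careful with the indexing convention $\mu^{(k)}_1 = \mu_{1+k}$, which governs the step from level $1$ down to level $0$).

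The main step is an induction on $m$ (for fixed $k$), proving $R^{(k-m)}_m = R^{(k)}_0$ for $0 \le m \le k$. For the base case $m = k$, the claim reads $R^{(0)}_k = R^{(k)}_0$; here $R^{(0)}_\cdot = R_\cdot$ is just the original model, so this is in fact the "in particular" statement, and I would actually run the induction the other way, establishing the general identity first and reading off $R_k = R^{(k)}_0$ as the $m=0$ instance. So: I would induct downward. The cleaner formulation is to prove, for all $j \ge 0$ and all $m \ge 0$, that $R^{(j)}_m = R^{(j+m)}_0$, by induction on $m$. For $m = 0$ this is trivial. For the inductive step, apply \eqref{Rrecgmn1} in the model with rates $\mu^{(j)}_\cdot$ at level $m$: this writes $R^{(j)*}_m$ in terms of $R^{(j)*}_{m+1}$ and $\mu^{(j)}_{m+1} = \mu_{j+m+1}$. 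By the induction hypothesis $R^{(j)}_{m+1} = R^{(j+1+m)}_0$ — wait, one needs instead to peel one unit off the shift index: observe that $R^{(j)}_{m}$ as a function of the whole tail $(\mu_{j+1}, \mu_{j+2}, \dots)$ equals $R^{(j+1)}_{m-1}$ evaluated on the same tail, because the recursion at level $m$ in model $j$ uses rate $\mu_{j+m+1}$, matching the recursion at level $m-1$ in model $j+1$ which uses rate $\mu_{(j+1)+(m-1)+1} = \mu_{j+m+1}$, and both bottom out at level $0$ identically. Iterating this sliding identity $m$ times yields $R^{(j)}_m = R^{(j+m)}_0$.

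The hard part will be making the "bottoming out at level $0$" rigorous: the recursion \eqref{Rrecgmn1} expresses each $R^*_n$ in terms of $R^*_{n+1}$, so it never terminates going downward, and there is genuinely no initial condition — the theorem statement itself acknowledges this. So the identity $R^{(j)}_m = R^{(j+m)}_0$ cannot be proven by unfolding finitely many steps of the recursion; rather, it must be justified by a probabilistic/pathwise argument showing that $R^{(j)}_m$ and $R^{(j+m)}_0$ are distributions of the same random variable. Concretely: $R^{(j)}_m$ is the conditional residual inter-arrival time, given queue length $m$, at a departure instant, in the model whose service rates at occupancy levels $1, 2, 3, \dots$ are $\mu_{j+1}, \mu_{j+2}, \dots$. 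The residual inter-arrival time at such a departure, and the subsequent evolution until the queue next empties, depends on the service rates only through those at levels $\ge m+1$ once we condition on being at level $m$ and tracking the process until — actually the relevant claim is subtler, since the residual at a departure leaving $m$ behind can be "inherited" from an earlier departure at level $m+1$, etc., exactly as in the proof of Theorem \ref{Rstarrec}. I would argue that the joint law of (residual inter-arrival time, current queue length $=m$) at a departure instant, hence $R^{(j)}_m$, is determined by the rates $(\mu_{j+m+1}, \mu_{j+m+2}, \dots)$ together with nothing else relevant to the residual — because relabeling occupancy level $m$ as level $0$ and shifting all rates accordingly produces a model in which the same sample-path events have the same probabilities. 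This relabeling sends the model with rates $\mu^{(j)}_\cdot$ (levels $\ge 1$ getting $\mu_{j+1}, \mu_{j+2},\dots$), conditioned at level $m$, to the model with rates $\mu_{j+m+1}, \mu_{j+m+2}, \dots$ at levels $\ge 1$, conditioned at level $0$; the latter is precisely model $\mu^{(j+m)}_\cdot$ at level $0$, giving $R^{(j)}_m = R^{(j+m)}_0$. Substituting $j = k - m$ yields the stated identity, and $j = 0$ (equivalently $m = k$) gives $R_k = R^{(k)}_0$.
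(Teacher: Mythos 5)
Your instinct to abandon the recursion \eqref{Rrecgmn1} is sound (it has no terminal condition, so the identity cannot come from finitely unfolding it), and your final probabilistic shift argument is in the same spirit as the paper's proof. But it has a gap at exactly the step you yourself flagged as ``the hard part.'' The assertion that relabeling occupancy level $m$ as level $0$ ``produces a model in which the same sample-path events have the same probabilities'' is not true of the conditioned stationary process as a whole: the model with rates $\mu^{(j)}_\cdot$ does visit levels $0,1,\dots,m-1$, where its dynamics (rates $\mu^{(j)}_1,\dots,\mu^{(j)}_m$) have no counterpart in the shifted model, and the Palm distribution over departures leaving $m$ behind depends on those lower rates through how frequently such departures occur. (For the same reason your intermediate claim that the \emph{joint} law of the pair (residual, queue length $=m$) at a departure is determined by the tail rates is false; only the \emph{conditional} law is.) The missing idea is what makes the lower levels irrelevant: every departure leaving $m$ behind terminates an excursion above level $m$ that was initiated by an arrival who found $m$ customers; at that arrival instant the inter-arrival renewal clock regenerates, and throughout the excursion only the rates at levels $\ge m+1$ govern service completions, so the residual inter-arrival time at the terminating departure is a functional of that excursion alone. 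These excursions are i.i.d.\ and coincide in law with the excursions above level $0$ in the shifted model, and since each excursion contributes exactly one such departure, the conditional distribution equals the per-excursion law irrespective of how often excursions occur.

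The paper packages precisely this step with the device already used for Theorem \ref{GM1Rn}: assume LCFS-PR without loss of generality (the queue-length and arrival processes do not depend on the service regime), so the customer who leaves $m$ behind is the one who arrived finding $m$, and the residual inter-arrival time at her departure is a function only of the process between her arrival and her departure, which is stochastically equivalent to the arrival-to-departure segment of a customer arriving at an empty system under rates $\mu^{(k)}_\cdot$. If you add this excursion/regeneration (or tagged-customer) argument, your sliding identity becomes rigorous and is essentially the paper's proof.
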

\begin{proof}
	We use a similar argument as in the proof of Theorem \ref{GM1Rn}. Consider the system associated with service rates $\mu_n^{(k-m)}$, for some $0\le m \le k$. Assume w.l.g that the LCFS-PR service regime is used. The residual inter-arrival time at the departure of a customer who leaves behind $m$ customers is a function of the service process only from her arrival to her departure. This process is stochastically equivalent to the process of a customer who finds $0$ customers in a system with service rates $\m^{(k)}_n$.
\end{proof}

 \begin{corollary} \label{shift}
For any fixed $k\ge 0$,
$$
R_{k+m}=R^{(k)}_m,~~~m\ge 0.
$$
Moreover, \eqref{Gmn1pieq} implies that the steady state distribution of the difference between the number of customers in the system and $k$, conditioned on the queue length being greater than or equal to $k$, equals the steady state queue length distribution in the G/Mn/1 model with service rates $\mu_n^{(k)}$.
 \end{corollary}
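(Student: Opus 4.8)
The plan is to prove the two assertions of the corollary separately: first the identification of conditional residual inter\nobreakdash-arrival distributions, $R_{k+m}=R^{(k)}_m$, and then the identification of the shifted, conditioned queue\nobreakdash-length distribution, which will follow from the ratio formula \eqref{Gmn1pieq} together with the first assertion.

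For the first assertion I would invoke Theorem~\ref{GMn1resind} twice, exploiting that shifting the service\nobreakdash-rate index by $k$ and then by $m$ is the same as shifting it by $k+m$: for the G/Mn/1 model with rates $\mu^{(k)}_n$, its $m$\nobreakdash-shift has rates $\bigl(\mu^{(k)}\bigr)^{(m)}_n=\mu^{(k)}_{n+m}=\mu_{n+m+k}=\mu^{(k+m)}_n$, so it coincides with the $(k+m)$\nobreakdash-shift of the original model. Applying the ``in particular'' part of Theorem~\ref{GMn1resind} to the original model gives $R_{k+m}=R^{(k+m)}_0$; applying it to the model with rates $\mu^{(k)}_n$ gives that $R^{(k)}_m$ equals the ``$R_0$'' of the model with rates $\bigl(\mu^{(k)}\bigr)^{(m)}_n$, which by the coincidence just noted is again $R^{(k+m)}_0$. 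Chaining the two identities yields $R_{k+m}=R^{(k)}_m$. As a sanity check one may instead rerun the LCFS\nobreakdash-PR argument from the proof of Theorem~\ref{GMn1resind}: under LCFS\nobreakdash-PR the residual inter\nobreakdash-arrival time at the departure of a customer who leaves behind $k+m$ customers is a functional only of the arrival process after her arrival and of the service rates $\mu_{k+m+1},\mu_{k+m+2},\dots$, which are precisely the rates $\mu^{(k)}_{m+1},\mu^{(k)}_{m+2},\dots$ governing a customer who leaves behind $m$ customers in the $\mu^{(k)}_n$\nobreakdash-model.

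For the second assertion I would argue at arrival (equivalently, departure) epochs, reading ``steady state queue\nobreakdash-length distribution'' as the sequence $(a_n)$, which is the natural one here since it is $(a_n)$ whose consecutive ratios \eqref{Gmn1pieq} pins down: $a_n/a_{n-1}=G^*(\mu_n)/(1-R^*_n(\mu_n))$ for $n\ge 1$. Writing the same relation for the $\mu^{(k)}_n$\nobreakdash-model, $a^{(k)}_j/a^{(k)}_{j-1}=G^*(\mu^{(k)}_j)/(1-(R^{(k)}_j)^*(\mu^{(k)}_j))$ for $j\ge 1$, and substituting $\mu^{(k)}_j=\mu_{j+k}$ together with $R^{(k)}_j=R_{j+k}$ from the first assertion, the right\nobreakdash-hand side becomes $G^*(\mu_{j+k})/(1-R^*_{j+k}(\mu_{j+k}))=a_{j+k}/a_{j+k-1}$. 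Hence the probability vectors $(a^{(k)}_j)_{j\ge 0}$ and $(a_{k+j}/\sum_{i\ge k}a_i)_{j\ge 0}$ have identical consecutive ratios and both sum to one, so they are equal; the latter is exactly the law of (queue length $-\,k$) given queue length $\ge k$. The case $k=0$ is trivial.

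The main obstacle is the bookkeeping in the first assertion: one must be scrupulous about which conditional\nobreakdash-residual distribution and which service\nobreakdash-rate sequence is attached to which model, and verify the shift\nobreakdash-composition identity so that the two invocations of Theorem~\ref{GMn1resind} genuinely refer to one and the same model --- the direct LCFS\nobreakdash-PR check is the safest way to be certain. The second assertion is then a near\nobreakdash-immediate consequence, the only points of care being that a probability distribution on $\{0,1,2,\dots\}$ is determined by its consecutive ratios (together with normalization and positivity of the atom at $0$), and that the shifted model is again positive recurrent so that the relevant normalizing constants exist.
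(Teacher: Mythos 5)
Your proof is correct, and it is essentially the argument the paper intends: the paper states the corollary without proof, as an immediate consequence of Theorem~\ref{GMn1resind} (for the first assertion) and of \eqref{Gmn1pieq} (for the second). For the first assertion you do not even need the double invocation together with the shift-composition identity $\bigl(\mu^{(k)}\bigr)^{(m)}_n=\mu^{(k+m)}_n$: applying Theorem~\ref{GMn1resind} with its $k$ replaced by $k+m$ gives $R^{(k)}_m=R^{((k+m)-m)}_m=R^{(k+m)}_0=R_{k+m}$ in one step, though your route (and your direct LCFS-PR check) is equally valid. Your treatment of the second assertion adds a precision the paper glosses over: the claim is correct when both queue-length distributions are read at arrival (equivalently, departure) epochs, i.e.\ as the sequences $(a_n)$ and $(a^{(k)}_j)$, since \eqref{Gmn1pieq} pins down the ratios $a_n/a_{n-1}$ (equivalently the ratios of $\pi_n\mu_n$) only for $n\ge 1$; the continuous-time version would fail at the boundary term --- e.g.\ in the plain G/M/1 with $k=1$ it would require the conditional mass $\pi_1/\rho=1-\sigma$ to equal $\pi_0=1-\rho$, which is false unless $\sigma=\rho$. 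Your ratio-matching-plus-normalization argument for the $(a_n)$ version is exactly right; the only loose end, which you flag yourself, is the routine observation that stability (hence existence of the normalizing constant) is inherited by the shifted model.
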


\begin{remark}
Consider the special case, where from some (arbitrary large) queue length, service rates are equal. Specifically,  assume that there exists an $N\ge 1$ and $\bar{\m}$ such that $\m_n=\bar{\m}$, for all $n\ge N$.
	An immediate consequence of Corollary \ref{shift} is that for $n\ge N$, $R_n=\bar{R}$, where $\bar{R}$ is the steady state residual inter-arrival time distribution in a G/M/1 model with constant service rate $\bar{\m}	$. This means that,
starting with $R^*_N$ from \eqref{Rstar}, the transforms $R^*_{N-1},R^*_{N-2},\dots,R^*_{0}$ can be computed recursively using \eqref{Rrecgmn1}.
\end{remark}

\begin{example} {\bf G/M/c model}\\
	Consider the G/M/c model. This model is probabilistically equivalent to the G/Mn/1 model with $\m_n=n\m$, $1\le n \le c$, and $\m_n=c\m$, $n>c$. Using the result in Theorem \ref{GMn1resind} and \eqref{Rstar} we get that $$R^*_c(s)=\frac{c\m(G^*(s)-\s)}{c\m(1-\s)-s},$$ where  $\s$ is the unique solution of $\s=G^*(c\m(1-\s))$ and $0<\s<1$. Using the results from Corollaries~\ref{shift} and~\ref{pidiscoro} we get that
$$\pi_n=C\s^n, ~~n\ge c$$
for some $C>0$. The remaining probabilities, $\pi_n$, $0 \le n <c$, can be computed using \eqref{Gmn1pieq} and \eqref{Rrecgmn1}. Observe that this method does not require to compute infinite sums, as in the embedded Markov chain analysis of this model, see e.g. p. 348 in \cite{asmus}.
\end{example}
\section{The Mn/Gn/1 queueing model} \label{MG1}

The model dealt with in this section is similar to the model analyzed in \cite{K2008}, with the addition that service time distribution is state dependent.
Consider a single server queueing system where service times are independent but not necessarily identically distributed. The distribution of a service time depends on the state of the system upon service commencement. Specifically, the  distribution of a service time that commence with $n\ge 1$ customers in the system is with CDF $G_n$ and LST $G^*_n$. The arrival process is as follows. Given that there are $n$ customers in the system at time $t$, the number of arrivals within the time interval $[t,t+\Delta]$ is independent of the past. The probability of a single arrival within this time interval equals $\l_n\Delta +o(\Delta)$ and the probability of two or more arrivals equals
$o(\Delta)$.

Assume that the system is stable. Let $\pi_n$, $n\ge 0$, be the steady state probabilities of having $n$ customers in the system at arbitrary instance, and let $R_n$ and $R^*_n$, $n\ge 1$ be the CDF and LST, respectively, of the residual service time at an arrival instance, given that there are $n$ customers in the system.


We start this section by a recursion for $\pi_n$, which does not have easily computable initial conditions. We then show that for $n \geq 2$, an arrival, who sees $n$ customers upon his arrival, is with probability $1-G^*(\m_n)$ the first to appear during the current service period. We end with a recursion on $R_n$, $n \geq 1$, and provide simple initial conditions.

The above model is similar to the Mn/Gn/1 model in \cite{BAR15}, except that service rates in \cite{BAR15} are not constant, but can change at arrival instances. In \cite{BAR15} the steady state probabilities and a recursion for the residual conditional service time are derived using the method of supplementary variables~\cite{cox}. We derive these results for our related model (though the extension to the model in \cite{BAR15} is straightforward) by using probabilistic and RBP arguments, similar to the ones in the previous sections.
We further like to point out that \cite{econo} also uses probabilistic arguments (but different from the ones used below) to derive the results in \cite{K2008} for the Mn/G/1 model.

\begin{theorem} \label{pirecGM1}
	The steady state probabilities for the Mn/Gn/1 model obey the following birth and death like equations:
	\begin{equation}\label{MnG1pi}
	\pi_{n-1}\l_{n-1}(1-R_{n-1}^*(\l_n))=\pi_n\l_n G_n^*(\l_n),~~~n\ge 1
	\end{equation}
	where $R_0^*=G_1^*$.
\end{theorem}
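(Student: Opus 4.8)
The plan is to apply the two-step transition (TST) special case of the rate balance principle (Theorem~\ref{RBP}) at each fixed level $n\ge 1$, taking $\mathcal{D}=\{k:0\le k<n\}$, $\mathcal{U}=\{k:k>n\}$ and $\mathcal{M}=\{n\}$, in the spirit of the proofs of Theorems~\ref{pirecgm1} and~\ref{Gmn1pi}. Since arrivals and departures in the Mn/Gn/1 model change the queue length by one, an $n$-two-step up transition is an arrival that finds $n-1$ customers followed, before the service currently in progress completes, by a second arrival; an $n$-two-step down transition is a departure that leaves behind $n$ customers followed, before the next arrival, by a second departure. Theorem~\ref{RBP} asserts that these two events occur at the same steady-state rate, and the argument reduces to expressing each rate in terms of $\pi_n$, $\lambda_n$, $R_n^*$ and $G_n^*$.

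For the up rate, the rate of arrivals that find $n-1$ customers is $\pi_{n-1}\lambda_{n-1}$, since arrivals occur at rate $\lambda_{n-1}$ whenever the system holds $n-1$ customers. Conditioned on such an arrival, the residual of the service then in progress is, by definition, distributed as $R_{n-1}$, with the convention $R_0=G_1$ (an arrival to an empty system initiates a fresh $G_1$-service whose residual at that very instant is the whole service time, so $R_0^*=G_1^*$); moreover, the time to the next arrival is exponential with rate $\lambda_n$ and independent of this residual. Hence the probability that the second arrival precedes the service completion equals $\int_0^\infty(1-e^{-\lambda_n x})\,dR_{n-1}(x)=1-R_{n-1}^*(\lambda_n)$, so the rate of $n$-two-step up transitions is $\pi_{n-1}\lambda_{n-1}\bigl(1-R_{n-1}^*(\lambda_n)\bigr)$.

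For the down rate, by the level-crossing identity of Example~\ref{RBE} the rate of departures that leave behind $n$ customers equals the rate of arrivals that find $n$ customers, namely $\pi_n\lambda_n$. Because $n\ge 1$, such a departure is immediately followed by the start of a fresh service in a system holding $n$ customers, of duration distributed as $G_n$ and independent of the next inter-arrival time, which is exponential with rate $\lambda_n$; thus the probability that this service completes before the next arrival is $\int_0^\infty e^{-\lambda_n x}\,dG_n(x)=G_n^*(\lambda_n)$, and the rate of $n$-two-step down transitions is $\pi_n\lambda_n G_n^*(\lambda_n)$. Equating the two rates via Theorem~\ref{RBP} gives \eqref{MnG1pi}.

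I expect the only delicate point to be the clean probabilistic identification, at an arrival that finds $n-1$ customers, of the residual service time with $R_{n-1}$ — including checking that the defining convention for $R_n$ refers to the pre-arrival queue length, that this residual is independent of the subsequent memoryless inter-arrival time, and that the base case $n=1$ indeed yields $R_0=G_1$. The level-crossing bookkeeping for the arrival and departure rates and the two memoryless race computations are routine.
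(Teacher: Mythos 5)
Your proposal is correct and follows essentially the same route as the paper: the TST/rate-balance identity at level $n$, with the up rate $\pi_{n-1}\l_{n-1}(1-R_{n-1}^*(\l_n))$ obtained from the residual service (with the convention $R_0=G_1$) racing an exponential $\l_n$ arrival, and the down rate $\pi_n\l_n G_n^*(\l_n)$ obtained via level crossing and a fresh $G_n$ service racing the next arrival. The points you flag as delicate (pre-arrival conditioning in the definition of $R_{n-1}$ and independence from the memoryless inter-arrival time) are treated at the same level of rigor in the paper's own proof.
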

\begin{proof}
	We use TST as in the analysis of the G/M/1 model. In order to initiate an $n$-two-step up transition, there must be a transition from state $n-1$ to state $n$, i.e., an arrival who finds $n-1$ customers in the system. This arrival should be followed by an additional arrival in order for an $n$-two-step arrival to occur. Hence, the $n$-two-step up transition rate is equal to the rate of arrivals who find $n-1$ customers in the system, $\pi_{n-1} \l_{n-1}$, multiplied by the probability that such arrival will be followed by an additional arrival. At the moment of the first arrival, for $n>1$, a residual service time is initiated, and for $n=1$ a fresh service is initiated. The probability of the event that an additional arrival occurs before this residual (or fresh) service time equals the probability that a generic random variable with distribution $R_{n-1}$ (or $R_0=G_1$, respectively) is greater than an exponential random variable with rate $\l_n$, given by $1-R_{n-1}^*(\l_n)$. Similarly, in order to initiate an $n$-two-step down transition, there must be a transition from state $n+1$ to state $n$, i.e., a departure who left behind $n$ customers in the system. Of course, such a departure must be followed by a consecutive departure in order to form an $n$-two-step down transition. Hence, the $n$-two-step down transition rate is equal to the rate of departures who leave behind $n$ customers, that equals to the rate of arrivals who find $n$ customers in the system, $\pi_n \l_n$, multiplied by the probability that such departure will be followed by a consecutive departure before the next arrival. At the moment of the first departure, a fresh service time is initiated. Hence, the probability of a second departure equals the probability that a generic random variable with distribution $G_n$ is less than an exponential random variable with rate $\l_n$. This probability equals $G_n^*(\l_n)$.
\end{proof}

\begin{corollary}\label{corpirecGM1}
For $n \ge 1$,
\[
\pi_n = \pi_0 \frac{\l_{0}}{\l_n} \prod_{k=1}^n  \frac {1-R_{k-1}^*(\l_k)}{ G_k^*(\l_k)}.
\]
\end{corollary}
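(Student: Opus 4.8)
The plan is to derive the closed-form expression for $\pi_n$ directly from the recursion in Theorem~\ref{pirecGM1} by iterating it. Equation~\eqref{MnG1pi} can be rewritten, for each $n \ge 1$, as
\[
\pi_n = \pi_{n-1}\,\frac{\l_{n-1}}{\l_n}\,\frac{1-R_{n-1}^*(\l_n)}{G_n^*(\l_n)},
\]
so the ratio $\pi_n/\pi_{n-1}$ is expressed entirely in terms of known transforms. The idea is then to telescope: writing $\pi_n = \pi_0 \prod_{k=1}^n (\pi_k/\pi_{k-1})$ and substituting the above for each factor.

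First I would verify that the recursion is indeed applicable for every $k$ from $1$ to $n$, using the convention $R_0^* = G_1^*$ from Theorem~\ref{pirecGM1} when $k=1$. Next I would carry out the telescoping product. The $\l$-factors combine as $\prod_{k=1}^n \l_{k-1}/\l_k = \l_0/\l_n$, since the product is telescoping (each $\l_k$ in a numerator cancels the same $\l_k$ in the denominator of the next factor, leaving only $\l_0$ on top and $\l_n$ on the bottom). The remaining factors simply accumulate as $\prod_{k=1}^n \bigl(1-R_{k-1}^*(\l_k)\bigr)/G_k^*(\l_k)$, which is exactly the stated product. Combining these gives
\[
\pi_n = \pi_0\,\frac{\l_0}{\l_n}\prod_{k=1}^n \frac{1-R_{k-1}^*(\l_k)}{G_k^*(\l_k)},
\]
as claimed.

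I do not anticipate any real obstacle here: the corollary is a routine consequence of iterating the birth-and-death-like balance equation of Theorem~\ref{pirecGM1}, and the only point requiring a moment's care is the bookkeeping of the telescoping $\l$-ratios and the correct handling of the $k=1$ term via the convention $R_0^* = G_1^*$ (for which the $k=1$ factor reads $(1-G_1^*(\l_1))/G_1^*(\l_1)$, consistent with the general formula). One might also remark that all denominators $G_k^*(\l_k)$ are strictly positive under the stability and non-degeneracy assumptions on the service-time distributions, so the expression is well defined.
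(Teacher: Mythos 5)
Your proposal is correct and is exactly the argument the paper intends: the corollary is stated as an immediate consequence of Theorem~\ref{pirecGM1}, obtained by rewriting \eqref{MnG1pi} as $\pi_n/\pi_{n-1}=\frac{\l_{n-1}}{\l_n}\frac{1-R_{n-1}^*(\l_n)}{G_n^*(\l_n)}$ and telescoping, with the convention $R_0^*=G_1^*$ handling the $k=1$ factor. Nothing is missing.
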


In order to use Corollary \ref{corpirecGM1} for calculation of the steady-state probabilities, the LSTs of the conditional residual service times are required. These LSTs can be derived using the  method  in \cite{K2008} for the Mn/G/1 model, but we  propose an alternative probabilistic approach.

\begin{lemma}\label{first}
	The probability that an arrival who finds $n\ge 2$ customers in the system, is the first to arrive during the current service equals $1-G_n^*(\l_n)$. In particular, this probability equals the probability that a service who commences when $n$ customers are in the system will be completed after the next arrival.
\end{lemma}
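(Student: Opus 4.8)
The plan is to mirror, in dual form, the proof of Lemma~\ref{firstGMn1}, applying the two-step transition (TST) special case of the RBP to the Mn/Gn/1 model and reusing the rate computations already made in the proof of Theorem~\ref{pirecGM1}. In that proof it was shown that for $n\ge 1$ the rate of $n$-two-step down transitions equals $\pi_n\l_n G_n^*(\l_n)$: the rate $\pi_n\l_n$ of departures leaving behind $n$ customers (which, by level crossing, equals the rate of arrivals finding $n$ customers), times the probability $G_n^*(\l_n)$ that the fresh service then started completes before the next arrival. By the RBP this also equals the rate of $n$-two-step up transitions. So the whole task reduces to giving a second expression for the rate of $n$-two-step up transitions, one that exposes the probability in the statement.

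For that second expression I would argue that, for $n\ge 2$, the $n$-two-step up transitions are \emph{exactly} the arrivals that find $n$ customers and are not the first to arrive during the current service. Indeed, an $n$-two-step up transition is a transition $n-1\to n$ immediately followed by $n\to n+1$; the latter is an arrival finding $n$ customers, and the former is an arrival finding $n-1\ge 1$ customers, which does not start a new service, so the service in progress when the arrival finds $n$ customers is the same one in progress before the $n-1\to n$ arrival --- hence this arrival is not the first to arrive during it. Conversely, if an arrival finds $n\ge 2$ customers and is not the first to arrive during the current service, then, since no departure can occur within a service, the immediately preceding event is an arrival, which necessarily found $n-1$ customers, so an $n$-two-step up transition has just occurred. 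With this identification, the rate of $n$-two-step up transitions equals the rate $\pi_n\l_n$ of arrivals finding $n$ customers times the steady-state probability $1-p_n$ that such an arrival is not the first to arrive during the current service, where $p_n$ is the probability in the statement. Equating the two expressions for the rate gives $\pi_n\l_n(1-p_n)=\pi_n\l_n G_n^*(\l_n)$, hence $p_n=1-G_n^*(\l_n)$. The second assertion is then immediate: $1-G_n^*(\l_n)=\int_0^\infty (1-e^{-\l_n x})\,dG_n(x)$ is precisely the probability that a service time with CDF $G_n$ exceeds an exponential random variable with rate $\l_n$, and the time until the next arrival, while $n$ customers are present, is $\text{exp}(\l_n)$-distributed.

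The part I expect to be delicate is the ``if and only if'' above, and in particular making clear why it needs $n\ge 2$: the number of customers present during a service equals the number present at its commencement plus the number of arrivals since (no departure occurs within a service), so an arrival that finds exactly one customer forces the commencement count to be $1$ with zero prior arrivals and is therefore \emph{always} the first to arrive during that customer's service; thus the identification, and the formula, genuinely fail at $n=1$. As a sanity check and an alternative route, one could instead give a rate-ratio argument in the spirit of Remark~\ref{altproof}: for $n\ge 2$ the rate of services that commence with $n$ customers equals, by level crossing, the rate $\pi_n\l_n$ of arrivals finding $n$ customers; among those services a fraction $1-G_n^*(\l_n)$ see at least one arrival before completing, and these are in one-to-one correspondence with the arrivals that find $n$ customers and are the first to arrive during the current service, whence again $p_n=1-G_n^*(\l_n)$.
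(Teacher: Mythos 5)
Your proposal is correct and follows essentially the same route as the paper: identify the $n$-two-step up transitions with arrivals who find $n$ customers and are not the first to arrive during the current service, reuse the down-transition rate $\pi_n\l_n G_n^*(\l_n)$ from Theorem~\ref{pirecGM1}, and equate the two via the RBP. Your more careful justification of the ``when and only when'' step (and of why $n\ge 2$ is needed), as well as your alternative rate-ratio argument, correspond to details the paper leaves implicit and to its own Remark giving an alternative proof in the spirit of Remark~\ref{altproof}.
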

\begin{proof}
We use the same TST argument as in the proof of Theorem \ref{pirecGM1}. We already argued that the rate of $n$-two-step down transitions equals $\pi_n \l_n G^*_n(\l_n)$, but the rate of $n$-two-step up transitions can now be argued differently than before. An $n$-two-step up transition occurs when and only when an arrival finds $n$ customers in the system, and she is not the first to arrive during the current service. Hence, the rate of $n$-two-step up transitions equals the rate of arrivals who find $n$ customers, $\pi_n \l_n$, times the probability that such arrival is not the first to arrive during the current service time, i.e., one minus the probability of being the first. Now, the result follows by equating the up and down transition rates.
\end{proof}
\\

\begin{remark} {\bf Alternative proof of Lemma \ref{first}}\\
Just as in Remark \ref{altproof}, an alternative proof can be given by calculating
the proportion of the rate of arrivals who find $n$ and are first to arrive out of the total rate of arrivals who find $n$ customers upon arrival.
\end{remark}

\begin{theorem}
	The conditional residual service time distributions $R_n$, $n \geq 1$, follow the recursion
		\begin{equation} \label{inimng}
		R_1 = D_{\l_1,G_1}
		\end{equation}
		and
		\begin{equation} \label{recmng}
		R_n = (1-G_n^*(\l_n)) D_{\l_n,G_n}+ G_n^*(\l_n)D_{\l_n,R_{n-1}}, \quad n \geq 2.
		\end{equation}

\end{theorem}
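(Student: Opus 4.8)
The plan is to run the dual of the argument used for the G/Mn/1 model in the proof of Theorem~\ref{Rstarrec}: tag an arrival that finds $n$ customers, split according to whether it is the first arrival during the service currently in progress, identify the conditional law of the residual service time in each case, and combine by the law of total probability. Everything rests on one structural fact: \emph{no departure occurs during a single service period}. Consequently, from the instant a service commences until it completes, the queue length is non-decreasing and changes only at arrival epochs; in particular, if an arrival finds $n$ customers and the service in progress commenced with $m$ customers, then $m\le n$ and exactly $n-m$ arrivals occurred during that service before the tagged one.

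For \eqref{inimng}, let the tagged arrival find $n=1$ customer. By the fact above the service in progress must have commenced with exactly one customer, so it is governed by $G_1$, and the tagged arrival is the first arrival during it. Since the queue length equals $1$ throughout the interval from that commencement to the arrival, the elapsed time is $Y_{\l_1}\sim\text{exp}(\l_1)$, independent of the service requirement; the tagged arrival finds $1$ customer rather than $0$ precisely on the event $Y_{\l_1}<G_1$, so by the definition of $D_{\l,F}$ in Section~\ref{notations} the residual service time is $D_{\l_1,G_1}$. (This is also the $n=1$ instance of \eqref{recmng} under the convention $R_0=G_1$ of Theorem~\ref{pirecGM1}; we argue it separately only because Lemma~\ref{first} is stated for $n\ge 2$.)

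For \eqref{recmng}, fix $n\ge 2$ and tag an arrival finding $n$ customers. By Lemma~\ref{first} it is the first arrival during the service in progress with probability $1-G_n^*(\l_n)$, and not the first with probability $G_n^*(\l_n)$. If it is the first, the service in progress commenced with $n$ customers, and exactly as in the base case (the intervening time is $\text{exp}(\l_n)$, independent of $G_n$ and conditioned to be the smaller of the two) the residual service time is $D_{\l_n,G_n}$. If it is not the first, there is an immediately preceding arrival belonging to the same service; since no departure has occurred between them, that arrival found $n-1$ customers and hence carries a residual service time with CDF $R_{n-1}$, the time separating it from the tagged arrival is $\text{exp}(\l_n)$ and, by the state-dependent memoryless nature of the arrival stream, independent of that residual, and ``belonging to the same service'' is precisely the event that this inter-arrival time is the smaller of the two; hence the residual service time at the tagged arrival is $D_{\l_n,R_{n-1}}$. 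The law of total probability then gives \eqref{recmng}.

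The step that demands the most care is the ``not first'' case, where one must verify that restricting attention to the non-first arrivals finding $n$ customers does not distort the claimed conditional law. I would make this precise through a Palm/rate bookkeeping: the arrivals that find $n-1$ customers and whose next arrival occurs before the service in progress completes form a point process of rate $\pi_{n-1}\l_{n-1}\bigl(1-R_{n-1}^*(\l_n)\bigr)$, and these are exactly the non-first arrivals that find $n$ customers --- the count $n-1$ incremented by one, with no departure possible in between, and conversely every non-first arrival finding $n$ has such a predecessor --- so by \eqref{MnG1pi} this rate also equals $\pi_n\l_n G_n^*(\l_n)$, matching the weight $G_n^*(\l_n)$ supplied by Lemma~\ref{first}; along this identification the residual service law is transported from $R_{n-1}$ to $D_{\l_n,R_{n-1}}$ for the reason given in the previous paragraph, and \eqref{recmng} follows.
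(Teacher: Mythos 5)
Your proposal is correct and follows essentially the same route as the paper's own proof: split the tagged arrival who finds $n$ customers according to whether she is the first arrival during the service in progress, use Lemma~\ref{first} for the two weights, identify the conditional residual laws as $D_{\l_n,G_n}$ and $D_{\l_n,R_{n-1}}$, and combine by the law of total probability. The extra touches you add (the explicit treatment of the $n=1$ initial condition and the Palm/rate consistency check against \eqref{MnG1pi}) only make explicit what the paper leaves implicit.
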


\begin{proof}
	Consider an arrival who finds $n$ customers in the system and is the first to arrive during the current service. The fact that this customer is the first to arrive implies that the distribution of the residual service time is equal to $D_{\l_n,G_n}$. Now, Consider an arrival who sees $n$ customers in the system and is not the first to arrive during the current service. That means that there is a customer that arrived before her, and saw $n-1$ customers. From the moment of the previous arrival, the residual service time is distributed as $R_{n-1}$, and the current arrival is the first to arrive during this residual service time. Hence, the distribution of the residual service time at the current arrival is $D_{\l_n,R_{n-1}}$. Finally, use of the law of total probability along with the probability in Lemma~\ref{first} completes the proof.
\end{proof}

\section{Summary} \label{summ}

In this paper we introduced a rate balance principle for general stochastic processes. For the special case of birth and death like processes, we showed that it leads to a new balance principle between the rates of two consecutive up transitions and of two consecutive down transitions, and demonstrated the potential of this ``two step transition'' rate principle to probabilistically derive some well known, but also new results for the G/Mn/1 and Mn/Gn/1 queues, such as recursions on the conditional distributions of the residual service and inter-arrival times, given the queue length. Hence, we believe that this principle is a promising tool to also explore other stochastic processes.

\begin{center}
	{\bf Acknowledgement}
\end{center}
This research was partly supported by Israel Science Foundation grant no. 1319/11.

\end{document}